\newtheorem{theorem}{Theorem}[section]
\newtheorem{lemma}[theorem]{Lemma}
\newtheorem{definition}[theorem]{Definition}
\newtheorem{proposition}[theorem]{Proposition}
\newtheorem{remark}[theorem]{Remark}
\newenvironment{proof}{{\bf Proof:}}{~\hfill $\Box$}
\newenvironment{keywords}{{\bf Keywords: }}{}
\numberwithin{equation}{section}
\begin{document}

\author{Qian Lin \footnote{ {\it Email address}:
linqian1824@163.com}
\\
{\small Institute of Mathematical Economics, Bielefeld University,
Postfach   100131, 33501  Bielefeld, Germany  }}

\title{ Differentiability   of stochastic differential equations driven by the $G$-Brownian motion
}
\date{}

\maketitle \noindent
\begin{abstract} In this paper, we study the differentiability of the solutions of stochastic
differential equations driven by the $G$-Brownian motion with
respect to the initial data and the parameter.
\end{abstract}

\noindent
\begin{keywords}
$G$-expectation, differentiability, $G$-Brownian motion, stochastic
differential equations
\end{keywords}

\section{Introduction }

Motivated by  statistic model uncertainty, risk  measures,
superhedging in finance and $g$-expectation, Peng (see
\cite{Peng:2006}, \cite{Peng:2007}, \cite{Peng:2008}, \cite
{Peng:2009} and \cite{Peng:2010b})  introduced a  type of nonlinear
expectation--$G$-expectation. Together with the notion of the
$G$-expectation,  Peng  introduced the related $G$-normal
distribution and the $G$-Brownian motion and established an It\^{o}
calculus for the $G$-Brownian motion.

In the framework of $G$-expectations,  many interesting facts that
are rather different from classical cases have been studied. The
$G$-Brownian motion has a very rich and interesting new structure
which non-trivially generalizes the classical one. A very
interesting new phenomenon of the $G$-Brownian motion $B$ is that
its quadratic process $\langle B\rangle$ is a stochastic process and
has independent and stationary increments which are identically
distributed.

 Peng  (see \cite{Peng:2006}, \cite{Peng:2007},
\cite{Peng:2008}, \cite{Peng:2010b})   derived the existence and
uniqueness of the solution to stochastic differential equations with
Lipschitz coefficients on the coefficients driven by the
$G$-Brownian motion. Recently, Gao \cite{Gao} studied  pathwise
properties and homeomorphic flows for stochastic differential
equations driven by the $G$-Brownian motion. Lin \cite{L} obtained
the continuity of stochastic integrals with respect to the
$G$-Brownian motion and solutions of stochastic differential
equations depending on parameters driven by the $G$-Brownian motion.

The purpose of  this paper is to investigate the differentiability
of solutions of stochastic differential equations driven by the
$G$-Brownian motion with respect to the initial data and the
parameter, which  has some potential applications, e.g., to go
further to obtain the maximum principle for stochastic optimal
control systems driven by a $G$-Brownian motion.

 The rest of the paper is organized as follows. In Section 2, we
  introduce some notations and preliminaries,  which we will need in what follows.
  In Section 3, we study the differentiability of the solutions of SDEs  driven by the
   $G$-Brownian motion with respect to the initial data. In Section 4, we study the differentiability of
the solutions of SDEs  driven by the $G$-Brownian motion with
respect to the parameter.

\section{Notations and preliminaries  }

The objective of  this section is to briefly recall  the theory of
the $G$-expectation and the related $G$-stochastic analysis (see
Peng \cite{Peng:2006}, \cite{Peng:2007}, \cite{Peng:2008} and \cite
{Peng:2009} for more details).

Let $\Omega$ be a given nonempty set and $\mathcal {H}$ be a linear
space of real functions defined on $\Omega$ such that if
$x_{1},\cdot\cdot\cdot,x_{m}\in \mathcal {H}$ then
$\varphi(x_{1},\cdot\cdot\cdot,x_{m})\in \mathcal {H}$, for each
$\varphi\in C_{l,lip}(\mathbb{R}^{m})$, where
$C_{l,lip}(\mathbb{R}^{m})$ denotes the linear space of functions
$\varphi$ satisfying
$$|\varphi(x)-\varphi(y)|\leq C(1+|x|^{n}+|y|^{n})|x-y|,\ \text{for all}\ x, y\in \mathbb{R}^{m},$$
for some $C>0$ and $\ n\in\mathbb{N}$, both depending on $\varphi$.
The space $\mathcal {H}$ is considered as a set of random
variables.\vskip2mm

\begin{definition}
{\bf A Sublinear expectation} $\mathbb{\hat{E}}$ on $\mathcal {H}$
is a functional $\mathbb{\hat{E}}:\mathcal {H}\mapsto \mathbb{R}$,
which satisfies the following properties: for all $X, Y \in \mathcal
{H}$, we have
\begin{enumerate}
\item[(i)] {\bf Monotonicity:} If $X\geq Y $, then
$\mathbb{\hat{E}}[X]\geq\mathbb{\hat{E}}[Y]$.
\item[(ii)] {\bf Constant preserving:}
$\mathbb{\hat{E}}[c]=c$, for all $c\in \mathbb{R}$.
\item[(iii)] {\bf Self-dominated property:}
$\mathbb{\hat{E}}[X]-\mathbb{\hat{E}}[Y]\leq\mathbb{\hat{E}}[X-Y]$.
\item[(iv)] {\bf Positive homogeneity:}
$\mathbb{\hat{E}}[\lambda X]=\lambda\mathbb{\hat{E}}[X],$ for all $
\lambda \geq 0$.
\end{enumerate}
The triple $(\Omega, \mathcal {H}, \mathbb{\hat{E}})$ is called a
sublinear expectation space.
\end{definition}\vskip2mm

\begin{definition} In a sublinear expectation space $(\Omega, \mathcal {H},
\mathbb{\hat{E}})$, a random vector $Y=(Y_{1},\cdots, Y_{n}),
Y_{i}\in \mathcal {H}$, is said to be independent under $
\mathbb{\hat{E}}$ from  another random vector $X=(X_{1},\cdots,
X_{m}), X_{i}\in \mathcal {H}$,  if for each test function $\varphi
\in C_{l,lip}(\mathbb{R}^{m+n})$, we have
$$\mathbb{\hat{E}}[\varphi(X, Y)]=\mathbb{\hat{E}}[\mathbb{\hat{E}}[\varphi(x, Y)]_{x=X}].$$
\end{definition}

\begin{definition}{\bf($G$-normal distribution)}
 Let be given two reals $\underline{\sigma}, \overline{\sigma}$ with
$0\leq\underline{\sigma}\leq \overline{\sigma}.$ A random variable
$\xi$ in a sublinear expectation space $(\Omega, \mathcal {H},
\mathbb{\hat{E}})$ is called $G_{\underline{\sigma},
\overline{\sigma}}$-normal distributed, denoted by $\xi\sim \mathcal
{N}(0, [{\underline{\sigma}^{2}, \overline{\sigma}}^{2}])$,  if for
each $\varphi \in C_{l,lip}(\mathbb{R})$, the following function
defined by
$$u(t, x):=\mathbb{\hat{E}}[\varphi(x+\sqrt{t}\xi)], \quad (t,x)\in [0, \infty)\times\mathbb{R},$$
is the unique continuous  viscosity solution with the polynomial
growth of the following parabolic partial differential equation :
$$\left\{\begin{array}{l l}
      \partial_{t}u(t, x)=G(\partial_{xx}^{2}u(t, x)),\quad (t, x)\in[0,
\infty)\times\mathbb{R},\\
       u(0, x)=\varphi(x).
         \end{array}
  \right.$$
 Here  $G=G_{\underline{\sigma},
\overline{\sigma}}$
 is the following sublinear function parameterized by $\underline{\sigma}$
 and $\overline{\sigma}$:
 $$G(\alpha)=\frac{1}{2}(\overline{\sigma}^{2}\alpha^{+}-\underline{\sigma}^{2}\alpha^{-}), \quad \alpha \in \mathbb{R}$$
 (Recall that $\alpha^{+}=\text{max}\{0,\alpha\}$ and
$\alpha^{-}=-\text{min}\{0,\alpha\}$). For simplicity, we suppose
that $\overline{\sigma}=1$ and $\underline{\sigma}=\sigma, 0\leq
\sigma \leq 1, $ in what follows.
\end{definition}

 Throughout this paper, we let $\Omega=C_{0}(\mathbb{R^{+}})$ be the space of all
real valued continuous functions $(\omega_{t})_{t\in
\mathbb{R^{+}}}$ with $\omega_{0}=0$, equipped with the distance
$$\rho(\omega^{1}, \omega^{2})
=\sum\limits_{i=1}^{\infty}2^{-i}\Big[(\max\limits_{t\in[0,i]}|\omega_{t}^{1}-\omega_{t}^{2}|)\wedge1\Big],
\ \omega_{t}^{1}, \omega_{t}^{2}\in\Omega.$$

For each $T > 0$, we consider the following space of random
variables:
\begin{eqnarray*}
L_{ip}^{0}(\mathcal
{F}_{T}):=\Big\{X(\omega)=\varphi(\omega_{{t_{1}}} \cdots,
\omega_{{t_{m}}}) \ | \ t_{1}, \cdots, t_{m}\in [0,T],  \text{ for
all} \ \varphi\in C_{l,lip}(\mathbb{R}^{m}), \ m\geq1\Big\}.
\end{eqnarray*}
 We notice
that $X,Y \in L_{ip}^{0}(\mathcal {F}_{t})$ implies $X\cdot Y \in
L_{ip}^{0}(\mathcal {F}_{t})$ and $|X|\in L_{ip}^{0}(\mathcal
{F}_{t})$. We further define
$$L_{ip}^{0}(\mathcal {F})=\bigcup_{n=1}^{\infty}L_{ip}^{0}(\mathcal
{F}_{n}).$$

 We will work on the canonical space $\Omega$ and set
$B_{t}(\omega)=\omega_{t}$, $t\in [0,\infty)$, for $\omega\in
\Omega.$ We now recall  a sublinear expectation $\mathbb{\hat{E}}$
defined
 on $\mathcal {H}_{T}^{0}=L_{ip}^{0}(\mathcal
{F}_{T})$ as well as on $\mathcal {H}^{0}=L_{ip}^{0}(\mathcal {F})$.
For this, we consider the function $G(a)= \frac{1 }{2}
(a^{+}-\sigma^{2}a^{-}), a \in \mathbb{R}$ (with
$\overline{\sigma}^{2}=1, \underline{\sigma}^{2}=\sigma^{2}\in [0,
1]$), and we apply the following procedure: for each $X\in \mathcal
{H}^{0}$ with
$$X=\varphi(B_{t_{1}}-B_{t_{0}}, B_{t_{2}}-B_{t_{1}}, \cdots, B_{t_{m}}-B_{t_{m-1}})$$
for some $m\geq 1, \varphi\in C_{l,lip}(\mathbb{R}^{m})$ and
$0=t_{0}\leq t_{1}\leq\cdots \leq t_{m}<\infty$, we set
\begin{eqnarray*}
&&\mathbb{\hat{E}}[\varphi(B_{t_{1}}-B_{t_{0}},
B_{t_{2}}-B_{t_{1}}, \cdots, B_{t_{m}}-B_{t_{m-1}})]\\
&&=\mathbb{\widetilde{E}}[\varphi(\sqrt{t_{1}-t_{0}}\xi_{1},
\sqrt{t_{2}-t_{1}}\xi_{2}, \cdots, \sqrt{t_{m}-t_{m-1}}\xi_{m})],
\end{eqnarray*}
where $(\xi_{1},\xi_{2},\cdots, \xi_{m})$ is an m-dimensional
G-normal distributed random vector in a sublinear expectation space
$(\widetilde{\Omega}, \widetilde{\mathcal {H}},
\widetilde{\mathbb{E}})$ such that $\xi_{i}\sim\mathcal {N}(0,
[\sigma^{2},1])$ and  $\xi_{i+1}$ is independent from $(\xi_{1},
\cdots, \xi_{i})$ for each $i=1, 2, \cdots, m-1$.

The related conditional expectation of
$X=\varphi(B_{t_{1}}-B_{t_{0}}, B_{t_{2}}-B_{t_{1}}, \cdots,
B_{t_{m}}-B_{t_{m-1}})$ under $\mathcal {H}_{t_{j}}^{0}$ is defined
by
\begin{eqnarray*}
\mathbb{\hat{E}}[X|\mathcal
{H}_{t_{j}}^{0}]&=&\mathbb{\hat{E}}[\varphi(B_{t_{1}}-B_{t_{0}},
B_{t_{2}}-B_{t_{1}}, \cdots, B_{t_{m}}-B_{t_{m-1}})|\mathcal
{H}_{t_{j}}^{0}]\\
&=&\psi(B_{t_{1}}-B_{t_{0}}, B_{t_{2}}-B_{t_{1}}, \cdots,
B_{t_{j}}-B_{t_{j-1}}),
\end{eqnarray*}
where
$$\psi(x_{1}, x_{2}, \cdots,
x_{j})=\mathbb{\widetilde{E}}[\varphi(x_{1}, x_{2}, \cdots, x_{j},
\sqrt{t_{j+1}-t_{j}}\xi_{j+1}, \cdots,
\sqrt{t_{m}-t_{m-1}}\xi_{m})],$$ $(x_{1}, x_{2}, \cdots,
x_{j})\in\mathbb{R}^{j}, 0\leq j\leq m.$

\vspace{2mm} For $p\geq 1$,
$\|X\|_{p}=\mathbb{\hat{E}}^{\frac{1}{p}}[|X|^{p}]$,  $X \in
L_{ip}^{0}(\mathcal {F})$, defines a norm on  $L_{ip}^{0}(\mathcal
{F})$. Let $\mathcal {H}=L_{G}^{p}(\mathcal {F})$ (resp. $\mathcal
{H}_{t}=L_{G}^{p}(\mathcal {F}_{t})$) be the completion of
$L_{ip}^{0}(\mathcal {F})$ (resp. $L_{ip}^{0}(\mathcal {F}_{t})$)
under the norm $\|\cdot\|_{p}$. Then the space $(L_{G}^{p}(\mathcal
{F}), \|\cdot\|_{p})$ is a Banach space and the operators
$\mathbb{\hat{E}}[\cdot] $ (resp. $\mathbb{\hat{E}}[\cdot|\mathcal
{H}_{t}]$) can be continuously extended to the Banach space
$L_{G}^{p}(\mathcal {F})$ (resp. $L_{G}^{p}(\mathcal {F}_{t})$).
Moreover, we have $L_{G}^{p}(\mathcal {F}_{t})\subseteq
L_{G}^{p}(\mathcal {F}_{T})\subset L_{G}^{p}(\mathcal {F})$, for all
$0\leq t \leq T <\infty$.\vskip2mm

\begin{definition}
  The expectation $\mathbb{\hat{E}}: L_{G}^{p}(\mathcal {F})\mapsto \mathbb{R}$  defined through the above procedure is
called G-expectation.
\end{definition}

\begin{proposition}\label{p2}
  For all
$t, s\in [0, \infty)$, we list the properties of
$\mathbb{\hat{E}}[\cdot|\mathcal {H}_{t}]$ that hold for all $X, Y
\in L_{G}^{p}(\mathcal {F}):$

\begin{enumerate}[(i)]
\item  If $X\geq Y$, then $\mathbb{\hat{E}}[X|\mathcal
{H}_{t}]\geq \mathbb{\hat{E}}[Y|\mathcal {H}_{t}]$;
\item $\mathbb{\hat{E}}[\eta|\mathcal
{H}_{t}]=\eta$, for all $\eta\in L_{G}^{p}(\mathcal {F}_{t})$;
\item $\mathbb{\hat{E}}[X|\mathcal
{H}_{t}]- \mathbb{\hat{E}}[Y|\mathcal
{H}_{t}]\leq\mathbb{\hat{E}}[X-Y|\mathcal {H}_{t}];$
\item If $\mathbb{\hat{E}}[Y|\mathcal
{H}_{t}]=-\mathbb{\hat{E}}[-Y|\mathcal {H}_{t}],$  then
$\mathbb{\hat{E}}[X+Y|\mathcal {H}_{t}]= \mathbb{\hat{E}}[X|\mathcal
{H}_{t}]+\mathbb{\hat{E}}[Y|\mathcal {H}_{t}];$
\item $\mathbb{\hat{E}}[\mathbb{\hat{E}}[X|\mathcal
{H}_{t}]|\mathcal {H}_{s}]=\mathbb{\hat{E}}[X|\mathcal {H}_{t\wedge
s}]$, and, in particular,
$\mathbb{\hat{E}}[\mathbb{\hat{E}}[X|\mathcal
{H}_{t}]]=\mathbb{\hat{E}}[X]$.
\end{enumerate}

\end{proposition}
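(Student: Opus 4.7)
The plan is to establish all five properties first on the dense subspace $L_{ip}^0(\mathcal{F})$ using the explicit representation
$$\mathbb{\hat{E}}[X|\mathcal{H}_{t_j}^0]=\psi(B_{t_1}-B_{t_0},\ldots,B_{t_j}-B_{t_{j-1}})$$
given in the construction, and then extend to $L_G^p(\mathcal{F})$ by continuity of $\mathbb{\hat{E}}[\cdot|\mathcal{H}_t]$ under the $\|\cdot\|_p$-norm. A convenient preliminary step is to put any pair $X,Y\in L_{ip}^0(\mathcal{F})$ on a common grid $0=t_0<t_1<\cdots<t_m$ by refinement, so that $X=\varphi(B_{t_1}-B_{t_0},\ldots,B_{t_m}-B_{t_{m-1}})$ and $Y=\tilde\varphi(B_{t_1}-B_{t_0},\ldots,B_{t_m}-B_{t_{m-1}})$ with $\varphi,\tilde\varphi\in C_{l,lip}(\mathbb{R}^m)$.

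Properties (i)--(iii) on $L_{ip}^0$ then transfer fiberwise from the sublinear expectation axioms for $\widetilde{\mathbb{E}}$ on the auxiliary $G$-normal space. For (i), $X\geq Y$ forces $\varphi\geq\tilde\varphi$ pointwise, and monotonicity of $\widetilde{\mathbb{E}}$ propagates this to the corresponding $\psi$'s. For (ii), choosing the grid so that $t_j=t$ and $\varphi$ depends only on the first $j$ coordinates makes $\psi=\varphi$. For (iii), the self-dominated property of $\widetilde{\mathbb{E}}$ gives the desired inequality for $\psi_X-\psi_Y$ pointwise in the frozen variables.

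Given (iii), property (iv) is essentially free: subadditivity gives $\mathbb{\hat{E}}[X+Y|\mathcal{H}_t]\leq \mathbb{\hat{E}}[X|\mathcal{H}_t]+\mathbb{\hat{E}}[Y|\mathcal{H}_t]$, while writing $X=(X+Y)+(-Y)$ and applying (iii) again yields $\mathbb{\hat{E}}[X|\mathcal{H}_t]\leq \mathbb{\hat{E}}[X+Y|\mathcal{H}_t]+\mathbb{\hat{E}}[-Y|\mathcal{H}_t]$; the hypothesis $\mathbb{\hat{E}}[-Y|\mathcal{H}_t]=-\mathbb{\hat{E}}[Y|\mathcal{H}_t]$ converts this into the matching lower bound. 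For (v), assuming $s\leq t$ and refining the grid to contain $s=t_i$ and $t=t_j$, one iterates the defining formula: the outer conditioning at $t_i$ re-evaluates the inner $\widetilde{\mathbb{E}}$ after freezing $\xi_1,\ldots,\xi_i$, and the independence of each $\xi_{k+1}$ from $(\xi_1,\ldots,\xi_k)$ collapses the two iterated sublinear integrals into a single integration over $\xi_{i+1},\ldots,\xi_m$, which is precisely $\mathbb{\hat{E}}[X|\mathcal{H}_s]$. The case $s>t$ is symmetric, and together they give $\mathbb{\hat{E}}[X|\mathcal{H}_{t\wedge s}]$.

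The final step is extension to $L_G^p(\mathcal{F})$ by density: given Cauchy approximations $X_n,Y_n\in L_{ip}^0$, property (iii) applied to $X_n-X_m$ shows that $\mathbb{\hat{E}}[\cdot|\mathcal{H}_t]$ is nonexpansive in $\|\cdot\|_p$, so each identity passes to the limit. The main obstacle I foresee is the careful bookkeeping in (v), where one must unpack the definition of $\psi$ twice and use independence in exactly the form $\widetilde{\mathbb{E}}[\widetilde{\mathbb{E}}[\varphi(x,\xi_{i+1},\ldots,\xi_m)]_{x=(\xi_1,\ldots,\xi_i)}]=\widetilde{\mathbb{E}}[\varphi(\xi_1,\ldots,\xi_m)]$, while keeping track of which increments of $B$ are being conditioned upon at each stage.
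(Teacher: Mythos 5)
The paper states this proposition without proof---it is recalled from Peng's work as part of the preliminaries---so there is no in-paper argument to compare against. Your sketch reproduces the standard construction-based proof (reduce $X,Y\in L_{ip}^0(\mathcal{F})$ to a common partition, transfer (i)--(iii) fiberwise through the axioms of $\widetilde{\mathbb{E}}$, deduce (iv) purely formally from (iii) via $X=(X+Y)+(-Y)$, unwind the two-stage definition of $\psi$ together with independence of $(\xi_{j+1},\dots,\xi_m)$ from $(\xi_1,\dots,\xi_j)$ for (v), then extend by the nonexpansiveness $\|\mathbb{\hat{E}}[X|\mathcal{H}_t]-\mathbb{\hat{E}}[Y|\mathcal{H}_t]\|_p\le\|X-Y\|_p$ coming from (iii)), and this is sound; note only that the case $s>t$ in (v) is not ``symmetric'' but follows from (ii), since $\mathbb{\hat{E}}[X|\mathcal{H}_t]$ is already $\mathcal{H}_t$-measurable.

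The one step that does not simply ``pass to the limit'' is (i). For general $X,Y\in L_G^p(\mathcal{F})$ the hypothesis $X\ge Y$ is not inherited by approximating sequences in $L_{ip}^0$, so the density argument that works for the identities (ii)--(v) does not by itself deliver the order-theoretic statement (i) on the completion. One must either interpret $X\ge Y$ quasi-surely and argue via $\mathbb{\hat{E}}[(Y-X)^+]=0$ combined with (iii) and the tower property (v), or confine (i) to the dense subspace. This is a known technical point (addressed in the Denis--Hu--Peng capacity framework) rather than an error in your strategy, but as written your extension step silently skips it.
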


\vspace{2mm}
 For $p\geq 1$ and arbitrary but fixed $0<T<\infty$, we first consider the
following space of step processes:
\begin{eqnarray*}
M_{G}^{p, 0}(0,T) &=& \bigg\{\eta:
\eta_{t}=\sum\limits_{j=0}^{N-1}\xi_{j}I_{[t_{j},t_{j+1})},
0=t_{0}<t_{1}<\cdots<t_{N}=T, \\&& \hskip 1cm\xi_{j}\in
L_{G}^{p}(\mathcal {F}_{t_{j}}), j=0,\cdots, N-1, \text{for all}\
N\geq 1\bigg\},
\end{eqnarray*}
and we  define the following norm in $M_{G}^{p, 0}(0,T)$:

$$\parallel \eta \parallel_{p}=\bigg(\frac{1}{T}\int_{0}^{T}\mathbb{\hat{E}}\Big[|\eta_{t}|^{p}\Big]dt\bigg)^{\frac{1}{p}}
=\bigg(\frac{1}{T}\sum\limits_{j=0}^{N-1}\mathbb{\hat{E}}\Big[|\xi_{j}|^{p}\Big](t_{j+1}-t_{j})\bigg)^{\frac{1}{p}}$$

Finally, we denote by $M_{G}^{p}(0,T)$ the completion of $M_{G}^{p,
0}(0,T)$ under the norm $\parallel \cdot\parallel_{p}.$

\begin{definition}
 A process $B=\{B_{t},t\geq 0\}$ in a sublinear expectation space $(\Omega, \mathcal {H},
 \mathbb{\hat{E}})$ is called a G-Brownian motion if for each
 $n\in\mathbb{N}$ and $0\leq t_{1}\leq\cdots \leq t_{n}<\infty$,
 $B_{t_{1}}, \cdots, B_{t_{n}}\in \mathcal {H}$ and the following
 property satisfied:
\begin{enumerate}
\item[(i)] $B_{0}=0$;
\item[(ii)] For each $t, s \geq0$, the difference $B_{t+s}-B_{t}$ is
$\mathcal {N}(0, \ [\sigma^{2} s, s])$-distributed and is
independent from $(B_{t_{1}}, \cdots, B_{t_{n}})$, for all
$n\in\mathbb{N}$ and $0\leq t_{1}\leq\cdots \leq t_{n}\leq t$.
\end{enumerate}
\end{definition}

\begin{remark}
The  canonical process $(B_{t})_{t\geq 0}$ in $(\Omega, \mathcal
{H})$, $\Omega=C_{0}(\mathbb{R}_{+})$, endowed with the
$G$-expectation $\mathbb{\hat{E}}$ is a $G$-Brownian motion.

\end{remark}

\begin{remark}
  In \cite{Peng:2006}, \cite{Peng:2007}, \cite{Peng:2008} and \cite{Peng:2010b},
   Peng established a stochastic calculus of
It\^o's type with respect to the G-Brownian motion  and its
quadratic variation process.  Peng derived an It\^o's formula and
moreover, he obtained the existence and uniqueness of the solution
to stochastic differential equations with Lipschitz coeffcients
driven by the $G$-Brownian motion.
\end{remark}

In order to simplify notations, we will only consider one
dimensional stochastic differential equations driven by the
$G$-Brownian motion. Multidimensional stochastic differential
equations driven by the $G$-Brownian motion can be studied in a
similar way.

In what follows, we let  $T$ be an arbitrarily fixed time horizon
and $\{B_t,\ 0\leq t\leq T\}$  a one dimensional G-Brownian motion,
and we consider the following stochastic differential equation:
\begin{eqnarray}\label{eq}
X_t^{x}=x+\int_0^t b(s, X_s^{x})ds+\int_0^t \sigma(s,
X_s^{x})dB_{s}+\int_0^t h(s, X_s^{x})d\langle B \rangle_{s}, \ t\in
[0,T],
\end{eqnarray}
where the initial condition $x\in \mathbb{R}$ is  given and
$b(\cdot, \cdot), \sigma(\cdot, \cdot), h(\cdot, \cdot)
:[0,T]\times\mathbb{R}\rightarrow\mathbb{R}$. Let us make the
following assumption:

\begin{list}{}{\setlength{\itemindent}{0cm}}

\item[(H)] There exists a positive constant $L$ such that, for all
 $x ,x^{\prime}\in\mathbb{R}, t\in[0,T]$,
\begin{eqnarray*}
&&|b(t, x)-b(t, x^{\prime})|+|\sigma(t, x)-\sigma(t,
x^{\prime})|+|h(t, x)-h(t, x^{\prime})|\leq L|x-x^{\prime}|.
\end{eqnarray*}
\end{list}

  \begin{lemma} \label{weiyi}
Under  the assumption (H), there exists a unique solution $X\in
M_{G}^{2}(0,T)$ of the stochastic differential equation (\ref{eq}).
\end{lemma}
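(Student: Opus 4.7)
The plan is to use a Picard iteration in $M_{G}^{2}(0,T)$, mimicking the classical proof but replacing the ordinary It\^o isometry and Doob inequalities by their $G$-expectation counterparts established in Peng's papers. Define $X_{t}^{0}=x$ and inductively set
\begin{equation*}
X_{t}^{n+1}=x+\int_{0}^{t}b(s,X_{s}^{n})\,ds+\int_{0}^{t}\sigma(s,X_{s}^{n})\,dB_{s}+\int_{0}^{t}h(s,X_{s}^{n})\,d\langle B\rangle_{s},\quad t\in[0,T].
\end{equation*}
The first thing I would verify is that the iteration is well-defined, i.e. $X^{n}\in M_{G}^{2}(0,T)$ for every $n$. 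This follows by induction using (H): the Lipschitz condition together with $b(\cdot,x),\sigma(\cdot,x),h(\cdot,x)\in M_{G}^{2}(0,T)$ shows that $b(\cdot,X^{n}_{\cdot})$, and likewise for $\sigma$ and $h$, lies in $M_{G}^{2}(0,T)$, and the three $G$-stochastic integrals then define an element of $M_{G}^{2}(0,T)$.

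Next I would establish the contraction-type estimate. Using the sublinearity of $\mathbb{\hat E}$, the elementary inequality $(a+b+c)^{2}\le 3(a^{2}+b^{2}+c^{2})$, the $G$-It\^o isometry $\mathbb{\hat E}\big[\big(\int_{0}^{t}\eta_{s}\,dB_{s}\big)^{2}\big]\le \int_{0}^{t}\mathbb{\hat E}[\eta_{s}^{2}]\,ds$, the analogous bound $\mathbb{\hat E}\big[\big(\int_{0}^{t}\eta_{s}\,d\langle B\rangle_{s}\big)^{2}\big]\le t\int_{0}^{t}\mathbb{\hat E}[\eta_{s}^{2}]\,ds$, the Cauchy--Schwarz estimate for the Lebesgue integral, and finally the Lipschitz assumption (H), one obtains a constant $C=C(L,T)$ such that
\begin{equation*}
\mathbb{\hat E}\big[|X_{t}^{n+1}-X_{t}^{n}|^{2}\big]\le C\int_{0}^{t}\mathbb{\hat E}\big[|X_{s}^{n}-X_{s}^{n-1}|^{2}\big]\,ds,\qquad t\in[0,T].
\end{equation*}
Iterating this inequality gives $\mathbb{\hat E}[|X_{t}^{n+1}-X_{t}^{n}|^{2}]\le M\,(Ct)^{n}/n!$, where $M=\sup_{t\le T}\mathbb{\hat E}[|X_{t}^{1}-x|^{2}]<\infty$ by the first step. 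Summability of the series $\sum_{n}(CT)^{n/2}/\sqrt{n!}$ then shows that $(X^{n})_{n}$ is Cauchy in the Banach space $M_{G}^{2}(0,T)$; denote its limit by $X$.

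I would then verify that $X$ solves (\ref{eq}). The continuity of $\eta\mapsto \int_{0}^{\cdot}\eta_{s}\,ds$, $\eta\mapsto \int_{0}^{\cdot}\eta_{s}\,dB_{s}$ and $\eta\mapsto \int_{0}^{\cdot}\eta_{s}\,d\langle B\rangle_{s}$ as maps from $M_{G}^{2}(0,T)$ to $M_{G}^{2}(0,T)$ (these are exactly the estimates used above) together with the Lipschitz bound on $b,\sigma,h$ allow passing to the limit in the Picard scheme, yielding a solution $X\in M_{G}^{2}(0,T)$ of (\ref{eq}). For uniqueness, if $X$ and $Y$ are two solutions in $M_{G}^{2}(0,T)$, the same three estimates applied to $X-Y$ give
\begin{equation*}
\mathbb{\hat E}\big[|X_{t}-Y_{t}|^{2}\big]\le C\int_{0}^{t}\mathbb{\hat E}\big[|X_{s}-Y_{s}|^{2}\big]\,ds,\qquad t\in[0,T],
\end{equation*}
and Gronwall's lemma forces $\mathbb{\hat E}[|X_{t}-Y_{t}|^{2}]=0$ for every $t\in[0,T]$, hence $X=Y$ in $M_{G}^{2}(0,T)$.

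The only real obstacle is handling the $d\langle B\rangle_{s}$ integral: unlike the classical Brownian case, $\langle B\rangle$ is itself a genuine random process, so one cannot argue with deterministic time changes. However, once the standard $G$-stochastic-calculus estimate $\mathbb{\hat E}[(\int_{0}^{t}\eta_{s}\,d\langle B\rangle_{s})^{2}]\le t\int_{0}^{t}\mathbb{\hat E}[\eta_{s}^{2}]\,ds$ is invoked, this term is controlled in exactly the same way as the drift term, and the rest of the argument is the classical Picard--Gronwall machinery.
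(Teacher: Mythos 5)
Your proposal is correct: the paper states this lemma without proof, citing Peng's work, where existence and uniqueness are obtained by exactly this kind of Picard iteration combined with the $G$-It\^o isometry, the quadratic-variation bound $d\langle B\rangle_s\le ds$ (since $\overline{\sigma}=1$), and Gronwall's lemma. Your argument is the standard one and matches the approach the paper relies on; the only points worth making explicit are that Lipschitz composition preserves membership in $M_{G}^{2}(0,T)$ (needed for the iteration to be well defined) and that $t\mapsto\mathbb{\hat{E}}[|X_t-Y_t|^{2}]$ is bounded and measurable before Gronwall is applied, both of which are routine in this framework.
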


\begin{lemma}\label{guji}
 For every $p\geq 2 $, there exists a positive constant $C_{p}$ such
 that, for any $T>0$ and $\eta\in M_{G}^{p}(0,T)$,
\begin{eqnarray*}\label{inequ1}
\mathbb{\hat{E}}\Big[\sup\limits_{t\in[0,T]}|\int_0^t
\eta_{s}ds|^{p}\Big] &\leq &T^{p-1}\int_0^T
\mathbb{\hat{E}}\Big[|\eta_{s}|^{p}\Big]ds,\\
\mathbb{\hat{E}}\Big[\sup\limits_{t\in[0,T]}|\int_0^t
\eta_{s}d\langle B\rangle_{s}|^{p}\Big]&\leq & T^{p-1}\int_0^T
\mathbb{\hat{E}}\Big[|\eta_{s}|^{p}\Big]ds,\\
\mathbb{\hat{E}}\Big[\sup\limits_{t\in[0,T]}|\int_0^t
\eta_{s}dB_{s}|^{p}\Big]&\leq & C_{p}T^{\frac{p}{2}-1}\int_0^T
\mathbb{\hat{E}}\Big[|\eta_{s}|^{p}\Big]ds.
\end{eqnarray*}
\end{lemma}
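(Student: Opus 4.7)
The plan is to handle the three estimates separately: pathwise H\"older bounds will take care of the first two (deterministic and $d\langle B\rangle$) integrals, while the third will be reduced via the $G$-Burkholder--Davis--Gundy (BDG) inequality from Peng's $G$-stochastic calculus to another application of H\"older. Throughout, a standard density argument lets me assume $\eta\in M_G^{p,0}(0,T)$: each of the three integral maps extends by continuity from step processes to $M_G^p(0,T)$, and both sides of each inequality depend continuously on $\eta$ in the relevant norm.

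For the first inequality, pathwise H\"older gives
\begin{equation*}
\Big|\int_0^t \eta_s\,ds\Big|^p \leq t^{p-1}\int_0^t|\eta_s|^p\,ds \leq T^{p-1}\int_0^T|\eta_s|^p\,ds,
\end{equation*}
whose right-hand side is deterministic in $t$ and nondecreasing, so taking the supremum over $t\in[0,T]$ costs nothing. Applying $\mathbb{\hat{E}}$ and interchanging with the Lebesgue integral (which is immediate for step processes and survives the passage to the limit) yields the claim.

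For the second inequality, the essential point is that since $\overline{\sigma}^{2}=1$, the $G$-Brownian quadratic variation satisfies $\langle B\rangle_t-\langle B\rangle_s\leq t-s$ quasi-surely, i.e.\ $d\langle B\rangle_s\leq ds$ in the sense of measures. Since $d\langle B\rangle_s$ is a positive measure, H\"older's inequality applies pathwise to give
\begin{equation*}
\Big|\int_0^t \eta_s\,d\langle B\rangle_s\Big|^p \leq \langle B\rangle_t^{p-1}\int_0^t|\eta_s|^p\,d\langle B\rangle_s \leq T^{p-1}\int_0^T|\eta_s|^p\,ds,
\end{equation*}
after which the argument proceeds exactly as in the previous case.

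For the third inequality, I invoke the $G$-BDG inequality of Peng: for $p\geq 2$ there exists $C_p>0$ such that
\begin{equation*}
\mathbb{\hat{E}}\Big[\sup_{t\in[0,T]}\Big|\int_0^t\eta_s\,dB_s\Big|^p\Big] \leq C_p\,\mathbb{\hat{E}}\Big[\Big(\int_0^T|\eta_s|^{2}\,d\langle B\rangle_s\Big)^{p/2}\Big].
\end{equation*}
Bounding $d\langle B\rangle_s\leq ds$ and then applying H\"older with exponent $p/2\geq 1$ on $[0,T]$ controls the right-hand side by $C_p T^{p/2-1}\int_0^T\mathbb{\hat{E}}[|\eta_s|^p]\,ds$, which is the stated bound. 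The only genuinely nontrivial ingredient in the whole argument is the $G$-BDG inequality; once it is cited, everything else is H\"older and Fubini.
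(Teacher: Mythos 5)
Your proof is correct. Note that the paper itself states Lemma \ref{guji} without proof, treating it as a known estimate from the cited literature (it appears, with essentially your argument, in Gao's paper on pathwise properties of $G$-SDEs), so there is no in-paper proof to compare against; your route --- pathwise H\"older for the $ds$ and $d\langle B\rangle_s$ integrals using $d\langle B\rangle_s\leq \overline{\sigma}^{2}\,ds = ds$ quasi-surely, the $G$-Burkholder--Davis--Gundy inequality followed by H\"older for the $dB_s$ integral, and the subadditivity bound $\mathbb{\hat{E}}[\int_0^T\cdot\,ds]\leq\int_0^T\mathbb{\hat{E}}[\cdot]\,ds$ justified first on step processes --- is exactly the standard derivation. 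The only external ingredient you rely on, the $G$-BDG inequality for $p\geq 2$, is indeed available in Peng's and Gao's work, so the citation is legitimate rather than a gap.
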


\section{Differentiability of solutions of SDEs  driven by the $G$-Brownian motion with respect to the initial data}

The objective of this section is to study the differentiability of
the solutions of stochastic differential equations  driven by the
$G$-Brownian motion with respect to the initial data.  We first give
a definition of continuity and differentiability as follows:
\begin{definition}
A process $\{Y_{t}\}_{t\in[0,T]}$ is said to be continuous in
$L_{G}^{2}$ if
$$\lim\limits_{h\rightarrow0}\mathbb{\hat{E}}[|Y_{t}-Y_{t+h}|^{2}]=0.$$
\end{definition}

\begin{definition}
Let  $g(x)=g(x_{1},\cdots, x_{n})$ and  $f(x)=f(x_{1},\cdots,
x_{n})$ be random functions. For some $1\leq i\leq n$, if
\begin{eqnarray*}
\lim\limits_{h\rightarrow0}\mathbb{\hat{E}}[|\frac{g(x_{1},\cdots,x_{i-1},x_{i},x_{i+1},\cdots,
x_{n})-g(x_{1},\cdots, x_{n})}{h}-f(x_{1},\cdots, x_{n})|^{2}]=0.
\end{eqnarray*}
then we define the partial derivative of  $g(x_{1},\cdots, x_{n})$
with respect to $x_{i}$ as  $f(x_{1},\cdots, x_{n})$.  We write
\begin{eqnarray*}
\frac{\partial g(x_{1},\cdots, x_{n})}{\partial
x_{i}}=g_{x_{i}}(x_{1},\cdots, x_{n})=f(x_{1},\cdots, x_{n}).
\end{eqnarray*}
\end{definition}

We also need the following propositions.
\begin{proposition}\label{pr1}
Under  the assumption (H), for all $p\geq0$, we have  the following
estimate for the solution $X$ of SDE (\ref{eq}): for all $r\in
[0,T]$,
$$\mathbb{\hat{E}}[\sup\limits_{t\in[0,r]}|X_{t}^{x}|^{p}]\leq C<+\infty.$$
Here the constant $C=C(p,x,r,L)$.
\end{proposition}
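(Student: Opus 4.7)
The plan is a standard moment estimate via Lemma \ref{guji} and a Gronwall argument, with the only subtlety being the usual a priori finiteness issue in the $G$-framework.

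First I would reduce to the case $p\geq 2$. For $p\in[0,2)$ one has $|X_t^x|^p\leq 1+|X_t^x|^2$, so by monotonicity of $\mathbb{\hat{E}}$ the bound follows at once from the $p=2$ case. From now on assume $p\geq 2$.

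Next I would extract linear growth from (H): for $f\in\{b,\sigma,h\}$,
\begin{equation*}
|f(s,y)|\leq |f(s,0)|+L|y|,
\end{equation*}
so $|f(s,X_s^x)|^p\leq 2^{p-1}(|f(s,0)|^p+L^p|X_s^x|^p)$. Since $b(\cdot,0),\sigma(\cdot,0),h(\cdot,0)\in M_G^2(0,T)$, one checks (again using the linear growth upgrade from $L^2$ to $L^p$ via Lipschitzness or by assuming these deterministic quantities are bounded on $[0,T]$) that $\int_0^T\mathbb{\hat{E}}[|f(s,0)|^p]\,ds<\infty$. Taking the SDE \eqref{eq}, raising to the $p$-th power, taking the supremum over $t\in[0,r]$ and applying Lemma \ref{guji} to each of the three integrals yields
\begin{equation*}
\mathbb{\hat{E}}\Big[\sup_{t\in[0,r]}|X_t^x|^p\Big]\leq K_1(p,x,r)+K_2(p,r)\int_0^r \mathbb{\hat{E}}\big[|X_s^x|^p\big]\,ds,
\end{equation*}
for constants $K_1,K_2$ depending only on $p$, $x$, $r$, $L$, $T$ and the quantities $\int_0^T\mathbb{\hat{E}}[|f(s,0)|^p]\,ds$. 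Since $\mathbb{\hat{E}}[|X_s^x|^p]\leq \mathbb{\hat{E}}[\sup_{u\in[0,s]}|X_u^x|^p]$, Gronwall's lemma (applied to the function $s\mapsto \mathbb{\hat{E}}[\sup_{u\in[0,s]}|X_u^x|^p]$) then gives the desired bound.

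The one step that requires care is justifying that $\mathbb{\hat{E}}[\sup_{t\in[0,r]}|X_t^x|^p]$ is a priori finite, so that Gronwall may be applied. This will be the main obstacle. The standard device is a localization: define $\tau_N=\inf\{t\in[0,T]:|X_t^x|\geq N\}\wedge T$, work with the stopped process $X^{x}_{\cdot\wedge\tau_N}$ (which is bounded by $N$ plus a jump term controlled by the coefficients evaluated on a bounded region, hence in $L_G^p$), derive the Gronwall inequality with constants independent of $N$, and then let $N\to\infty$ by monotone convergence in the sublinear expectation framework. Alternatively, one may first prove the estimate for $p=2$ using the already-known fact $X\in M_G^2(0,T)$ from Lemma \ref{weiyi} together with Lemma \ref{guji}, and then bootstrap from $p=2$ to general $p\geq 2$ by an induction on $p$ along a sequence of doubling exponents, each step reducing to a finite integrand.
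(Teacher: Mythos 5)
Your proposal is correct and follows essentially the same route as the paper: for $p\geq 2$ the paper likewise combines the Lipschitz (linear growth) bound with Lemma \ref{guji} and Gronwall's inequality, and then reduces smaller exponents to larger ones (the paper uses H\"older for $1\leq p<2$ and the bound $|X|^{p}\leq 1+|X|^{1+p}$ for $0<p<1$, whereas you use the equivalent one-step bound $|X|^{p}\leq 1+|X|^{2}$). Your remark on the a priori finiteness needed before invoking Gronwall is a point the paper passes over silently, so your treatment is if anything more careful on that score.
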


\begin{proof}
We give the proof in three steps.

Step 1.  For $p\geq 2$, by equation(\ref{eq}) we have
\begin{eqnarray*}
|X_{t}^{x}|^{p}\leq 4^{p-1}\Big(|x|^{p}+|\int_0^t b(s,
X_s^{x})ds|^{p}+ |\int_0^t \sigma(s, X_s^{x})dB_{s}|^{p} +|\int_0^t
h(s, X_s^{x})d\langle B\rangle_{s}|^{p}\Big).
\end{eqnarray*}
From the subadditivity of the $G$-expectation  it follows that
\begin{eqnarray*}
\mathbb{\hat{E}}[\sup\limits_{t\in[0,r]}|X_{t}^{x}|^{p}]
 &\leq & 4^{p-1}\Big(|x|^{p}+\mathbb{\hat{E}}\Big[\sup\limits_{t\in[0,r]}|\int_0^t b(s,
X_s^{x})ds|^{p}\Big]\\
&&+\mathbb{\hat{E}}\Big[\sup\limits_{t\in[0,r]}|\int_0^t \sigma(s,
X_s^{x})dB_{s}|^{p}\Big]
+\mathbb{\hat{E}}\Big[\sup\limits_{t\in[0,r]}|\int_0^t h(s,
X_s^{x})d\langle B\rangle_{s}|^{p}\Big]\Big).
\end{eqnarray*}
Thus, from (H), Lemma \ref{guji}  it follows that
\begin{eqnarray*}
\mathbb{\hat{E}}[\sup\limits_{t\in[0,r]}|X_{t}^{x}|^{p}]
&\leq &C \Big[|x|^{p}+r^{p-1}\int_0^r \Big(\mathbb{\hat{E}}[|b(s,0)|^{p}]+\mathbb{\hat{E}}[|X_s^{x}|^{p}]\Big)ds\\
& & +r^{\frac{p}{2}-1}\int_0^r
\Big(\mathbb{\hat{E}}[|\sigma(s,0)|^{p}]+\mathbb{\hat{E}}[|X_s^{x}|^{p}]\Big)ds
\\ && +r^{p-1}\int_0^T \Big(\mathbb{\hat{E}}[|h(s,0)|^{p}]+\mathbb{\hat{E}}[|X_s^{x}|^{p}]\Big)ds\Big]\\
&\leq &C \Big(1+\int_0^r
\mathbb{\hat{E}}[|X_s^{x}|^{p}]ds\Big)\\
&\leq &C \Big(1+\int_0^r
\mathbb{\hat{E}}[\sup\limits_{s'\in[0,s]}|X_{s'}^{x}|^{p}]ds\Big).
\end{eqnarray*}
Then Gronwall's inequality yields
$$\mathbb{\hat{E}}[\sup\limits_{t\in[0,r]}|X_{t}^{x}|^{p}]\leq C<+\infty.$$

Step 2. For all $1\leq p < 2$, from H\"{o}lder inequality  under the
$G$-expectation and Step 1 it follows that
$$\mathbb{\hat{E}}[\sup\limits_{t\in[0,r]}|X_{t}^{x}|^{p}]\leq
\mathbb{\hat{E}}[\sup\limits_{t\in[0,r]}|X_{t}^{x}|^{2p}]^{\frac{1}{2}}\leq
C<+\infty.$$

Step 3. For all $0< p < 1$, since
$$|X_{t}^{x}|^{p}\leq |X_{t}^{x}|^{p}1_{\{|X_{t}^{x}|^{p}\leq1\}}+|X_{t}^{x}|^{p}1_{\{|X_{t}^{x}|^{p}\geq1\}}
\leq 1+|X_{t}^{x}|^{1+p},$$ then from Step 2, we have
$$\mathbb{\hat{E}}[\sup\limits_{t\in[0,r]}|X_{t}^{x}|^{p}]\leq
\mathbb{\hat{E}}[1+\sup\limits_{t\in[0,r]}|X_{t}^{x}|^{1+p}]\leq
C<+\infty.$$
 The proof is complete.
\end{proof}

Similar to the proof of Proposition \ref{pr1}, we have the following
proposition.\vskip2mm
\begin{proposition}\label{pr2}
 Let us assume (H). Then for every $p\geq2$, there exists a positive constant $C$ such
that
$$\mathbb{\hat{E}}[|X_t^{x}-X_{t}^{y}|^{p}]
\leq C|x-y|^{p}, \ \text{for all} \ t \in [0,T],$$ where $C$ depends
only on $p,T$.
\end{proposition}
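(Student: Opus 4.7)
The plan is to follow the strategy of the proof of Proposition \ref{pr1}, now applied to the difference process $X_t^x - X_t^y$ rather than to $X_t^x$ itself. Writing the equation (\ref{eq}) for $x$ and for $y$ and subtracting, I get
\begin{eqnarray*}
X_t^x - X_t^y &=& (x-y) + \int_0^t [b(s,X_s^x)-b(s,X_s^y)]\,ds + \int_0^t [\sigma(s,X_s^x)-\sigma(s,X_s^y)]\,dB_s\\
&& + \int_0^t [h(s,X_s^x)-h(s,X_s^y)]\,d\langle B\rangle_s.
\end{eqnarray*}
Using $|a_1+\cdots+a_4|^p \leq 4^{p-1}(|a_1|^p+\cdots+|a_4|^p)$ together with the subadditivity of $\mathbb{\hat{E}}$, I obtain an upper bound for $\mathbb{\hat{E}}[\sup_{t\in[0,r]}|X_t^x-X_t^y|^p]$ as $4^{p-1}|x-y|^p$ plus the $\mathbb{\hat{E}}$-norm of the suprema of the three integral terms.

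Next I would apply Lemma \ref{guji} to each of the three integral terms, which converts the supremum in $t$ into a time integral over $[0,r]$ of $\mathbb{\hat{E}}[|\cdot|^p]$ of the integrands, at the cost of factors $r^{p-1}$ or $C_p r^{p/2-1}$. Applying then the Lipschitz assumption (H) pointwise to each integrand replaces, e.g., $|b(s,X_s^x)-b(s,X_s^y)|^p$ by $L^p |X_s^x - X_s^y|^p$. Absorbing all prefactors (which depend only on $p, L, T$) into a single constant $C = C(p,T)$, I arrive at
$$\mathbb{\hat{E}}\big[\sup_{t\in[0,r]}|X_t^x-X_t^y|^p\big] \leq C|x-y|^p + C \int_0^r \mathbb{\hat{E}}\big[\sup_{s'\in[0,s]}|X_{s'}^x-X_{s'}^y|^p\big]\,ds.$$

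Finally, Gronwall's inequality applied to the function $r \mapsto \mathbb{\hat{E}}[\sup_{t\in[0,r]}|X_t^x-X_t^y|^p]$ yields $\mathbb{\hat{E}}[\sup_{t\in[0,T]}|X_t^x-X_t^y|^p] \leq C|x-y|^p e^{CT}$, which is stronger than the stated conclusion. I expect no substantive obstacle: the only mild subtlety is that one has to verify that $\sup_{t\in[0,r]}|X_t^x-X_t^y|^p$ lies in the domain where $\mathbb{\hat{E}}$ and the estimates of Lemma \ref{guji} make sense; this is already implicit in the regularity $X^x,X^y\in M_G^2(0,T)$ given by Lemma \ref{weiyi} combined with the moment bound of Proposition \ref{pr1}. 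The constant $C$ depends only on $p, T, L$ by construction, as required.
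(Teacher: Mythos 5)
Your proposal is correct and follows exactly the route the paper intends: the paper omits the proof, stating only that it is ``similar to the proof of Proposition \ref{pr1}'', and your argument---subtracting the two equations, applying the power-mean inequality and subadditivity, invoking Lemma \ref{guji} and the Lipschitz condition (H), and closing with Gronwall's inequality---is precisely that adaptation. The only remark is that your conclusion with the supremum inside the expectation is in fact slightly stronger than the stated claim, which is harmless.
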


We have the following differentiability result with respect to the
initial data.\vskip2mm
\begin{theorem}\label{th1}
For all $t\in [0,T]$, if  $b_{x}(t, \cdot),\sigma_{x}(t, \cdot),
h_{x}(t, \cdot)\in C_{l,lip}(\mathbb{R})$ and are bounded, then
$X^{x}_{t}$ is  differentiable in $L_{G}^{2}$ with respect to $x$.
Moreover, $Y_{t}^{x}:=\frac{\partial X^{x}_{t}}{\partial x}$
satisfies the following stochastic differential equation
\begin{eqnarray}\label{eq6}
Y_t^{x}=1+\int_0^t b_{x}(s, X_s^{x})Y_s^{x}ds+\int_0^t \sigma_{x}(s,
X_s^{x})Y_s^{x}dB_{s}+\int_0^t h_{x}(s, X_s^{x})Y_s^{x}d\langle
B\rangle_{s}, \ t\in [0,T].
\end{eqnarray}
\end{theorem}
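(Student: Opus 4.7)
The plan is to follow the classical variational SDE approach adapted to the $G$-framework. First I would verify that equation (\ref{eq6}) itself admits a unique solution $Y^{x}\in M_{G}^{2}(0,T)$: since $b_{x}(s,X_{s}^{x})$, $\sigma_{x}(s,X_{s}^{x})$ and $h_{x}(s,X_{s}^{x})$ are bounded (hence yield Lipschitz coefficients $y\mapsto b_{x}(s,X_{s}^{x})y$, etc., with a uniform Lipschitz constant), Lemma \ref{weiyi} applies. I would also record, via Lemma \ref{guji} and Gronwall as in Proposition \ref{pr1}, that $\mathbb{\hat{E}}[\sup_{t\in[0,T]}|Y_{t}^{x}|^{p}]<+\infty$ for every $p\geq 2$; this $L_{G}^{p}$-boundedness of $Y^{x}$ will be used to absorb error terms.

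Next, for $\varepsilon\neq 0$, define the difference quotient $Z_{t}^{\varepsilon}:=(X_{t}^{x+\varepsilon}-X_{t}^{x})/\varepsilon$. Using the fundamental theorem of calculus on the $C_{l,lip}$ functions $b(s,\cdot),\sigma(s,\cdot),h(s,\cdot)$, I write
\begin{eqnarray*}
Z_{t}^{\varepsilon}=1+\int_{0}^{t} a_{s}^{\varepsilon}Z_{s}^{\varepsilon}\,ds
+\int_{0}^{t}\alpha_{s}^{\varepsilon}Z_{s}^{\varepsilon}\,dB_{s}
+\int_{0}^{t}\gamma_{s}^{\varepsilon}Z_{s}^{\varepsilon}\,d\langle B\rangle_{s},
\end{eqnarray*}
with $a_{s}^{\varepsilon}:=\int_{0}^{1}b_{x}(s,X_{s}^{x}+\lambda(X_{s}^{x+\varepsilon}-X_{s}^{x}))\,d\lambda$, and similarly $\alpha_{s}^{\varepsilon},\gamma_{s}^{\varepsilon}$ built from $\sigma_{x}$ and $h_{x}$. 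The boundedness of $b_{x},\sigma_{x},h_{x}$ ensures these processes are uniformly bounded, independent of $\varepsilon$.

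I then subtract equation (\ref{eq6}) from the SDE for $Z^{\varepsilon}$ and split each coefficient difference in the standard way, e.g.\
$a_{s}^{\varepsilon}Z_{s}^{\varepsilon}-b_{x}(s,X_{s}^{x})Y_{s}^{x}
=a_{s}^{\varepsilon}(Z_{s}^{\varepsilon}-Y_{s}^{x})+\bigl(a_{s}^{\varepsilon}-b_{x}(s,X_{s}^{x})\bigr)Y_{s}^{x}$,
and analogously for the $\sigma$ and $h$ terms. Applying Lemma \ref{guji} to the resulting SDE for $Z^{\varepsilon}-Y^{x}$ and using boundedness of $a^{\varepsilon},\alpha^{\varepsilon},\gamma^{\varepsilon}$ yields
\begin{eqnarray*}
\mathbb{\hat{E}}\bigl[\sup_{t\in[0,r]}|Z_{t}^{\varepsilon}-Y_{t}^{x}|^{2}\bigr]
\leq C\int_{0}^{r}\mathbb{\hat{E}}\bigl[\sup_{u\in[0,s]}|Z_{u}^{\varepsilon}-Y_{u}^{x}|^{2}\bigr]ds
+C\,R(\varepsilon),
\end{eqnarray*}
where $R(\varepsilon)$ collects terms of the form $\int_{0}^{T}\mathbb{\hat{E}}[|a_{s}^{\varepsilon}-b_{x}(s,X_{s}^{x})|^{2}|Y_{s}^{x}|^{2}]\,ds$ and its $\sigma,h$ analogues. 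Gronwall's inequality then reduces the theorem to proving $R(\varepsilon)\to 0$.

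The main obstacle, and the substantive step, is showing $R(\varepsilon)\to 0$. Here I would combine three ingredients: (i) Proposition \ref{pr2}, which gives $\mathbb{\hat{E}}[|X_{s}^{x+\varepsilon}-X_{s}^{x}|^{p}]\leq C|\varepsilon|^{p}$ so that $X_{s}^{x}+\lambda(X_{s}^{x+\varepsilon}-X_{s}^{x})\to X_{s}^{x}$ in $L_{G}^{p}$ uniformly in $\lambda$; (ii) the $C_{l,lip}$ regularity of $b_{x},\sigma_{x},h_{x}$, which upgrades this to convergence of $a^{\varepsilon}_{s},\alpha^{\varepsilon}_{s},\gamma^{\varepsilon}_{s}$ to $b_{x}(s,X_{s}^{x}),\sigma_{x}(s,X_{s}^{x}),h_{x}(s,X_{s}^{x})$ in $L_{G}^{p}$ via a polynomial-growth estimate combined with Proposition \ref{pr1}; (iii) boundedness of the partial derivatives together with the finite moments of $Y^{x}$, which via the Cauchy–Schwarz (H\"older) inequality under $\mathbb{\hat{E}}$ lets me dominate the integrand in $R(\varepsilon)$ and pass the limit under the time integral by dominated convergence. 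Once $R(\varepsilon)\to 0$ is established, Gronwall yields $\mathbb{\hat{E}}[\sup_{t\in[0,T]}|Z_{t}^{\varepsilon}-Y_{t}^{x}|^{2}]\to 0$, which is exactly the $L_{G}^{2}$-differentiability claimed.
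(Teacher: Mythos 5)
Your proposal is correct and follows essentially the same route as the paper: the same difference quotient $Z^{\varepsilon}$, the same fundamental-theorem-of-calculus representation of its SDE, the same splitting $a_{s}^{\varepsilon}Z_{s}^{\varepsilon}-b_{x}(s,X_{s}^{x})Y_{s}^{x}=a_{s}^{\varepsilon}(Z_{s}^{\varepsilon}-Y_{s}^{x})+(a_{s}^{\varepsilon}-b_{x}(s,X_{s}^{x}))Y_{s}^{x}$, and the same reliance on Lemma \ref{guji}, Propositions \ref{pr1}--\ref{pr2} and Gronwall. The only cosmetic difference is that you dispose of the remainder $R(\varepsilon)$ by Cauchy--Schwarz and dominated convergence, whereas the paper uses the Young inequality $2ab\leq\varepsilon^{-1}a^{2}+\varepsilon b^{2}$ and a two-stage limit ($h\rightarrow 0$ then $\varepsilon\rightarrow 0$); your explicit verification of the well-posedness and $L_{G}^{p}$-bounds for $Y^{x}$ is a point the paper leaves implicit.
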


\begin{proof}
Let $h\neq 0$ be small. For simplicity, we put
\begin{eqnarray*}
X_t:=X_t^{x},\ Y_t:=Y_t^{x}, \ \widetilde{X}_t:=X_t^{x+h}, \
Z^{h}_t:=\frac{\widetilde{X}_t-X_t}{h}.
\end{eqnarray*}
Then we have
 \begin{eqnarray}\label{eq8}
Z_t^{h}&=&1+\frac{1}{h}\int_0^t [b(s, \widetilde{X}_s)-b(s,
X_s)]ds+\frac{1}{h}\int_0^t [\sigma(s, \widetilde{X}_s)-\sigma(s, X_s)]dB_{s}\nonumber\\
&&+\frac{1}{h}\int_0^t [h(s, \widetilde{X}_s)-h(s, X_s)]d\langle B
\rangle_{s}.
\end{eqnarray}
Since   $b_{x}(t, \cdot),\sigma_{x}(t, \cdot), h_{x}(t, \cdot)\in
C_{l,lip}(\mathbb{R})$,  $t\in [0,T]$, we have
 \begin{eqnarray}\label{eq5}
Z_t^{h}&=&1+\int_0^t \int_0^1 b_{x}(s,
X_s+\theta(\widetilde{X}_s-X_s))d\theta Z_s^{h} ds
\nonumber\\&&+\int_0^t \int_0^1 \sigma_{x}(s,
X_s+\theta(\widetilde{X}_s-X_s))d\theta Z_s^{h}dB_{s}\nonumber\\
&&+\int_0^t \int_0^1 h_{x}(s,
X_s+\theta(\widetilde{X}_s-X_s))d\theta Z_s^{h}d\langle B
\rangle_{s}\nonumber\\
&\doteq&1+I_{1}+I_{2}+I_{3}.
\end{eqnarray}
Therefore, for some $k\in\mathbb{N}$,
 \begin{eqnarray*}\label{}
&&\mathbb{\hat{E}}[\sup\limits_{t\in [0,T]}|I_{1}-\int_0^t b_{x}(s,
X_s)Y_sds|^{2}]\\
&\leq &2\mathbb{\hat{E}}[(\int_0^T \int_0^1 |b_{x}(s,
X_s+\theta(\widetilde{X}_s-X_s))|d\theta |Z_s^{h}-Y_s|
ds)^{2}]\\&&+2\mathbb{\hat{E}}[(\int_0^T \int_0^1 |b_{x}(s,
X_s+\theta(\widetilde{X}_s-X_s))- b_{x}(s, X_s)|d\theta |Y_s|
ds)^{2}]\\
&\leq &C \mathbb{\hat{E}}[\int_0^T |Z_s^{h}-Y_s|^{2}
ds]\\&&+C\mathbb{\hat{E}}[(\int_0^T
(|\widetilde{X}_s-X_s|+|\widetilde{X}_s-X_s|^{k+1}+
|X_s|^{k}|\widetilde{X}_s-X_s|)
 |Y_s|ds)^{2}].
\end{eqnarray*}
For all $\varepsilon>0,$ from
$2ab\leq\frac{1}{\varepsilon}a^{2}+\varepsilon b^{2}, a, b\geq 0,$
it follows that
\begin{eqnarray*}\label{}
&&\mathbb{\hat{E}}[\sup\limits_{t\in [0,T]}|I_{1}-\int_0^t b_{x}(s,
X_s)Y_sds|^{2}]\\
&\leq &C \int_0^T \mathbb{\hat{E}}[\sup\limits_{r\in
[0,s]}|Z_r^{h}-Y_r|^{2}]ds+C\varepsilon\int_0^T
\mathbb{\hat{E}}[|Y_s|^{4}]ds.\\&&+\frac{C}{\varepsilon}\int_0^T
\mathbb{\hat{E}}[|\widetilde{X}_s-X_s|^{4}+|\widetilde{X}_s-X_s|^{4k+4}+|X_s|^{4k}|\widetilde{X}_s-X_s|^{4}]ds.
\end{eqnarray*}
Then by virtue of Proposition \ref{pr1} and Proposition \ref{pr2},
we obtain
\begin{eqnarray}\label{eq2}
&&\mathbb{\hat{E}}[\sup\limits_{t\in [0,T]}|I_{1}-\int_0^t b_{x}(s,
X_s)Y_sds|^{2}]\nonumber\\
&\leq &C \int_0^T \mathbb{\hat{E}}[\sup\limits_{r\in
[0,s]}|Z_r^{h}-Y_r|^{2}]ds+C\varepsilon+\frac{C}{\varepsilon}
(h^{4}+h^{4k+4}).
\end{eqnarray}
Using  similar arguments, we obtain that for some $m\in\mathbb{N}$,
\begin{eqnarray}\label{eq3}
&&\mathbb{\hat{E}}[\sup\limits_{t\in [0,T]}|I_{2}-\int_0^t
\sigma_{x}(s, X_s)Y_sdB_{s}|^{2}]\nonumber\\
&\leq &C \int_0^T \mathbb{\hat{E}}[\sup\limits_{r\in
[0,s]}|Z_r^{h}-Y_r|^{2}]ds+C\varepsilon+\frac{C}{\varepsilon}
(h^{4}+h^{4m+4}).
\end{eqnarray}
and for some $ n\in\mathbb{N}$,
\begin{eqnarray}\label{eq4}
&&\mathbb{\hat{E}}[\sup\limits_{t\in [0,T]}|I_{3}-\int_0^t h_{x}(s,
X_s)Y_sd\langle
B\rangle_{s}|^{2}]\nonumber\\
&\leq &C \int_0^T \mathbb{\hat{E}}[\sup\limits_{r\in
[0,s]}|Z_r^{h}-Y_r|^{2}]ds+C\varepsilon+\frac{C}{\varepsilon}
(h^{4}+h^{4n+4}).
\end{eqnarray}
Then (\ref{eq6}), (\ref{eq5}), (\ref{eq2}), (\ref{eq3}) and
(\ref{eq4})   yield
\begin{eqnarray*}\label{}
&&\mathbb{\hat{E}}[\sup\limits_{t\in [0,T]}|Z_t^{h}-Y_t|^{2}]\nonumber\\
&\leq &C \int_0^T \mathbb{\hat{E}}[\sup\limits_{r\in
[0,s]}|Z_r^{h}-Y_r|^{2}]ds+C\varepsilon+\frac{C}{\varepsilon}
(h^{4}+h^{4k+4}+h^{4m+4}+h^{4n+4}),
\end{eqnarray*}
and from Gronwall's inequality it follows that
\begin{eqnarray*}\label{}
\mathbb{\hat{E}}[\sup\limits_{t\in [0,T]}|Z_t^{h}-Y_t|^{2}]\leq
C\varepsilon+\frac{C}{\varepsilon}
(h^{4}+h^{4k+4}+h^{4m+4}+h^{4n+4}).
\end{eqnarray*}
Letting $h\rightarrow0$, we get
\begin{eqnarray*}\label{}
\lim\limits_{h\rightarrow0}\mathbb{\hat{E}}[\sup\limits_{t\in
[0,T]}|Z_t^{h}-Y_t|^{2}]\leq C\varepsilon.
\end{eqnarray*}
Therefore,
\begin{eqnarray}\label{eq7}
\lim\limits_{h\rightarrow0}\mathbb{\hat{E}}[\sup\limits_{t\in
[0,T]}|Z_t^{h}-Y_t|^{2}]=0.
\end{eqnarray}
  The proof is complete.
\end{proof}

\begin{remark}\label{re1}
We can check that the following holds true, which we will use in
what follows.
\begin{eqnarray*}\label{}
\lim\limits_{h\rightarrow0}\mathbb{\hat{E}}[\sup\limits_{t\in
[0,T]}|Z_t^{h}-Y_t|^{4}]=0.
\end{eqnarray*}
\end{remark}

\begin{theorem}\label{th2}
Under the conditions of Theorem \ref{th1},  $\frac{\partial
X^{x}_{t}}{\partial x}$ is continuous with respect to $t$ in
$L_{G}^{2}$.
\end{theorem}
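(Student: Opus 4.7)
The plan is to show continuity by estimating the difference $Y_{t+h}^{x} - Y_{t}^{x}$ directly from the SDE (\ref{eq6}) satisfied by $Y^{x}$, then applying the moment inequalities of Lemma \ref{guji}.

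First I would write, for $t \in [0,T]$ and $h$ small enough that $t+h \in [0,T]$,
\begin{eqnarray*}
Y_{t+h}^{x} - Y_{t}^{x} &=& \int_{t}^{t+h} b_{x}(s, X_{s}^{x}) Y_{s}^{x}\, ds + \int_{t}^{t+h} \sigma_{x}(s, X_{s}^{x}) Y_{s}^{x}\, dB_{s} \\
& & +\, \int_{t}^{t+h} h_{x}(s, X_{s}^{x}) Y_{s}^{x}\, d\langle B \rangle_{s}.
\end{eqnarray*}
By subadditivity of $\mathbb{\hat{E}}$ and Lemma \ref{guji} applied with $p=2$ on the interval $[t,t+h]$, each of the three contributions to $\mathbb{\hat{E}}[|Y_{t+h}^{x} - Y_{t}^{x}|^{2}]$ is bounded by a constant multiple of $\int_{t}^{t+h} \mathbb{\hat{E}}[|b_{x}(s,X_{s}^{x})Y_{s}^{x}|^{2}]\,ds$ and analogous terms with $\sigma_{x}$ and $h_{x}$. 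Since the theorem assumes $b_{x}(s,\cdot), \sigma_{x}(s,\cdot), h_{x}(s,\cdot)$ are bounded (by some constant $K$), this reduces the estimate to a constant multiple of $\int_{t}^{t+h} \mathbb{\hat{E}}[|Y_{s}^{x}|^{2}]\,ds$.

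The next step is a uniform moment bound $\sup_{s \in [0,T]} \mathbb{\hat{E}}[|Y_{s}^{x}|^{2}] \leq C < +\infty$. Since $Y^{x}$ solves a linear SDE of the form (\ref{eq}) with driving coefficients $b_{x}(s,X_{s}^{x})$, $\sigma_{x}(s,X_{s}^{x})$, $h_{x}(s,X_{s}^{x})$ that are uniformly bounded, I can repeat the argument of Proposition \ref{pr1}: apply the inequality $|a+b+c+d|^{2}\leq 4(|a|^{2}+|b|^{2}+|c|^{2}+|d|^{2})$ to the integral form of (\ref{eq6}), use Lemma \ref{guji} together with the boundedness of $b_{x}, \sigma_{x}, h_{x}$ to obtain
\begin{eqnarray*}
\mathbb{\hat{E}}\Big[\sup_{t \in [0,r]} |Y_{t}^{x}|^{2}\Big] \leq C\Big(1 + \int_{0}^{r} \mathbb{\hat{E}}\Big[\sup_{s' \in [0,s]}|Y_{s'}^{x}|^{2}\Big]\, ds\Big),
\end{eqnarray*}
and then conclude by Gronwall's inequality.

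Combining these two pieces gives $\mathbb{\hat{E}}[|Y_{t+h}^{x} - Y_{t}^{x}|^{2}] \leq C h$, which tends to $0$ as $h \to 0$, proving continuity in $L_{G}^{2}$. I do not expect any serious obstacle here: the only slight subtlety is checking that the linear SDE for $Y^{x}$ fits the framework needed to invoke the moment estimates, but this is immediate from the boundedness of the coefficients $b_{x}, \sigma_{x}, h_{x}$, and the measurability/adaptedness needed to regard the integrands as elements of $M_{G}^{2}(0,T)$ is inherited from the Lipschitz structure together with Proposition \ref{pr1}.
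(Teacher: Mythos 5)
Your proof is correct, but it takes a genuinely different route from the paper's. The paper never touches the SDE (\ref{eq6}) for $Y$ in this argument: it works entirely with the difference quotients $Z^{h}_{t}=(X^{x+h}_{t}-X^{x}_{t})/h$, first showing $\mathbb{\hat{E}}[|Z^{h}_{t}|^{2}]\leq C$ uniformly in $h$ (whence $\mathbb{\hat{E}}[|Y_{t}|^{2}]\leq K$ by the convergence of Theorem \ref{th1}), then proving the equicontinuity estimate $\mathbb{\hat{E}}[|Z^{h}_{t}-Z^{h}_{r}|^{2}]\leq K(t-r)$ uniformly in $h$ directly from the representation (\ref{eq8}), the Lipschitz hypothesis (H) and Proposition \ref{pr2}, and finally passing to the limit through the triangle inequality $\mathbb{\hat{E}}[|Y_{t}-Y_{s}|^{2}]\leq 3\mathbb{\hat{E}}[|Y_{t}-Z^{h}_{t}|^{2}]+3\mathbb{\hat{E}}[|Z^{h}_{t}-Z^{h}_{s}|^{2}]+3\mathbb{\hat{E}}[|Z^{h}_{s}-Y_{s}|^{2}]$ combined with (\ref{eq7}). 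You instead exploit the fact that Theorem \ref{th1} already identifies $Y$ as the solution of a linear SDE with bounded coefficients, derive the uniform bound $\sup_{s\in[0,T]}\mathbb{\hat{E}}[|Y_{s}^{x}|^{2}]\leq C$ by repeating the Gronwall argument of Proposition \ref{pr1}, and then read off $\mathbb{\hat{E}}[|Y^{x}_{t+h}-Y^{x}_{t}|^{2}]\leq Ch$ from Lemma \ref{guji} on the interval $[t,t+h]$. Your route is shorter, avoids carrying the parameter $h$ of the difference quotient, and yields a quantitative H\"older-type modulus of continuity rather than mere continuity; what it requires in exchange is the a priori moment estimate for $Y$ itself (which the paper obtains only indirectly, via the uniform bound on $Z^{h}$) and the observation that the integrands $b_{x}(s,X^{x}_{s})Y^{x}_{s}$, etc., belong to $M^{2}_{G}(0,T)$ so that Lemma \ref{guji} applies --- both of which you correctly flag and which are indeed harmless given the boundedness of $b_{x},\sigma_{x},h_{x}$ and the well-posedness of (\ref{eq6}) from Lemma \ref{weiyi}.
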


\begin{proof}
We use the same notations as those in the proof of Theorem
\ref{th1}. For all $t\in[0,T]$, by the condition of $b,\sigma, h$
and Proposition \ref{pr2}  we have
 \begin{eqnarray*}\label{}
\mathbb{\hat{E}}[|Z_t^{h}|^{2}]&\leq&4+\frac{4}{h^{2}}\mathbb{\hat{E}}[|\int_0^t
[b(s,\widetilde{X}_s)-b(s,X_s)]ds|^{2}]
+\frac{4}{h^{2}}\mathbb{\hat{E}}[|\int_0^t [\sigma(s, \widetilde{X}_s)-\sigma(s, X_s)]dB_{s}|^{2}]\\
&&+\frac{4}{h^{2}}\mathbb{\hat{E}}[|\int_0^t [h(s,
\widetilde{X}_s)-h(s, X_s)]d\langle B \rangle_{s}|^{2}]\\
&\leq&4+\frac{4t}{h^{2}}\int_0^t
\mathbb{\hat{E}}[|b(s,\widetilde{X}_s)-b(s,X_s)|^{2}]ds
+\frac{4}{h^{2}}\int_0^t \mathbb{\hat{E}}[|\sigma(s, \widetilde{X}_s)-\sigma(s, X_s)|^{2}]ds\\
&&+\frac{4t}{h^{2}}\int_0^t \mathbb{\hat{E}}[|h(s,
\widetilde{X}_s)-h(s, X_s)|^{2}]ds\\
&\leq&4+\frac{C}{h^{2}}\int_0^t
\mathbb{\hat{E}}[|\widetilde{X}_s-X_s|^{2}]ds\\
&\leq&C,
\end{eqnarray*}
where $C$ is a constant depending on $T$ and the Lipschitz constant
of $b,\sigma, h$. Therefore, from (\ref{eq7}) we have
 \begin{eqnarray*}\label{}
\mathbb{\hat{E}}[|Y_t|^{2}]\leq
2\mathbb{\hat{E}}[|Y_t-Z_t^{h}|^{2}]+2\mathbb{\hat{E}}[|Z_t^{h}|^{2}]\leq
K<\infty.
\end{eqnarray*}
Without loss of generality, we suppose that $0\leq r \leq t \leq T$.
By (\ref{eq8}) we have
 \begin{eqnarray*}\label{}
\mathbb{\hat{E}}[|Z_t^{h}-Z_r^{h}|^{2}]&\leq&\frac{3}{h^{2}}\mathbb{\hat{E}}[|\int_r^t
[b(s,\widetilde{X}_s)-b(s,X_s)]ds|^{2}]
+\frac{3}{h^{2}}\mathbb{\hat{E}}[|\int_r^t [\sigma(s, \widetilde{X}_s)-\sigma(s, X_s)]dB_{s}|^{2}]\\
&&+\frac{3}{h^{2}}\mathbb{\hat{E}}[|\int_r^t [h(s,
\widetilde{X}_s)-h(s, X_s)]d\langle B \rangle_{s}|^{2}].
\end{eqnarray*}
Then by virtue of Lipschitz condition of $b$ and Proposition
\ref{pr2}, we obtain
 \begin{eqnarray*}\label{}
&&\frac{1}{h^{2}}\mathbb{\hat{E}}[|\int_r^t
[b(s,\widetilde{X}_s)-b(s,X_s)]ds|^{2}]\\
&\leq&\frac{t-r}{h^{2}}\int_r^t
\mathbb{\hat{E}}[|b(s,\widetilde{X}_s)-b(s,X_s)|^{2}]ds \\
&\leq&\frac{C(t-r)}{h^{2}}\int_r^t
\mathbb{\hat{E}}[|\widetilde{X}_s-X_s|^{2}]ds \\
&\leq&C(t-r)^{2},
\end{eqnarray*}
where $C$ is a constant which is independent of $t,s$. Using similar
arguments we obtain
\begin{eqnarray*}\label{}
\frac{1}{h^{2}}\mathbb{\hat{E}}[|\int_r^t [\sigma(s,
\widetilde{X}_s)-\sigma(s, X_s)]dB_{s}|^{2}] \leq C(t-r),
\end{eqnarray*}
and
\begin{eqnarray*}\label{}
\frac{1}{h^{2}}\mathbb{\hat{E}}[|\int_r^t [h(s,
\widetilde{X}_s)-h(s, X_s)]d\langle B \rangle_{s}|^{2}] \leq
C(t-r)^{2}.
\end{eqnarray*}
Then from the above inequalities, we have
 \begin{eqnarray}\label{eq9}
\mathbb{\hat{E}}[|Z_t^{h}-Z_r^{h}|^{2}]\leq C(t-r).
\end{eqnarray}
Therefore, by (\ref{eq7}) and (\ref{eq9}) we have
 \begin{eqnarray*}\label{}
\mathbb{\hat{E}}[|Y_t-Y_s|^{2}]&\leq &
3\mathbb{\hat{E}}[|Y_t-Z_t^{h}|^{2}]+3\mathbb{\hat{E}}[|Z_t^{h}-Z_s^{h}|^{2}]+3\mathbb{\hat{E}}[|Z_s^{h}-Y_s|^{2}]\\
&\leq &
3\mathbb{\hat{E}}[|Y_t-Z_t^{h}|^{2}]+C(t-r)+3\mathbb{\hat{E}}[|Z_s^{h}-Y_s|^{2}].
\end{eqnarray*}
Letting $h\rightarrow 0$ and $t\rightarrow s$, we get the desired
result.  The proof is complete.
\end{proof}\vskip1mm

\begin{theorem}\label{th2}
 Under the conditions of Theorem \ref{th1},  and for $t\in[0,T],$
 if $b_{xx}(t,\cdot),\sigma_{xx}(t,\cdot), h_{xx}(t,\cdot)\in
C_{l,lip}(\mathbb{R})$ and are bounded, then $\frac{\partial
X^{x}_{t}}{\partial x}$ is continuously differentiable in
$L_{G}^{2}$ with respect to $x$. Moreover,
$P_{t}^{x}:=\frac{\partial^{2} X^{x}_{t}}{\partial x^{2}}$ satisfies
the following stochastic differential equation
\begin{eqnarray*}\label{}
P_t^{x}&=&\int_0^t b_{xx}(s, X_s^{x})(Y_s^{x})^{2}ds+\int_0^t
\sigma_{xx}(s, X_s^{x})(Y_s^{x})^{2}dB_{s}+\int_0^t h_{xx}(s,
X_s^{x})(Y_s^{x})^{2}d\langle
B\rangle_{s}\\
&&+\int_0^t b_{x}(s, X_s^{x})P_s^{x}ds+\int_0^t \sigma_{x}(s,
X_s^{x})P_s^{x}dB_{s}+\int_0^t h_{x}(s, X_s^{x})P_s^{x}d\langle
B\rangle_{s}, \ t\in [0,T],
\end{eqnarray*}
where $Y_t^{x}$ is defined in Theorem \ref{th1}.
\end{theorem}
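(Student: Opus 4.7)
The plan is to mimic the strategy used in the proof of Theorem \ref{th1}, one level deeper. Writing $\widetilde{X}_s=X^{x+h}_s$, $\widetilde{Y}_s=Y^{x+h}_s$, $Z^h_s=(\widetilde{X}_s-X_s)/h$ and $Q^h_s=(\widetilde{Y}_s-Y_s)/h$, I first have to verify that the candidate equation for $P^x$ actually admits a solution in $M^2_G(0,T)$. Since $b_x,\sigma_x,h_x$ are bounded and Lipschitz by hypothesis, the equation for $P$ is a \emph{linear} SDE of the same type as (\ref{eq}) with Lipschitz coefficients in $P$, so Lemma \ref{weiyi} (together with the finite moment bounds on $X^x,Y^x$ coming from Proposition \ref{pr1} applied to (\ref{eq6})) ensures existence and uniqueness of $P^x\in M^2_G(0,T)$ with $\widehat{\mathbb{E}}[\sup_{t\leq T}|P^x_t|^p]<\infty$ for every $p\geq 1$. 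The same argument also gives finite moments of all orders for $Y^{x+h}$ uniformly in small $h$, and Proposition \ref{pr2} applied to (\ref{eq6}) shows $\widehat{\mathbb{E}}[|\widetilde{Y}_s-Y_s|^{p}]\leq C h^{p}$.

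Next, subtracting the equations for $\widetilde{Y}$ and $Y$ and dividing by $h$, and expanding $b_x(s,\widetilde{X}_s)\widetilde{Y}_s-b_x(s,X_s)Y_s=b_x(s,\widetilde{X}_s)(\widetilde{Y}_s-Y_s)+\bigl(b_x(s,\widetilde{X}_s)-b_x(s,X_s)\bigr)Y_s$, and similarly for the $\sigma_x$ and $h_x$ terms, the mean value theorem (valid because $b_{xx},\sigma_{xx},h_{xx}\in C_{l,lip}(\mathbb{R})$) gives
\begin{eqnarray*}
Q^h_t&=&\int_0^t b_x(s,\widetilde{X}_s)Q^h_s\,ds+\int_0^t\!\!\int_0^1 b_{xx}(s,X_s+\theta(\widetilde{X}_s-X_s))\,d\theta\, Z^h_s Y_s\,ds\\
&&+\int_0^t \sigma_x(s,\widetilde{X}_s)Q^h_s\,dB_s+\int_0^t\!\!\int_0^1 \sigma_{xx}(s,X_s+\theta(\widetilde{X}_s-X_s))\,d\theta\, Z^h_s Y_s\,dB_s\\
&&+\int_0^t h_x(s,\widetilde{X}_s)Q^h_s\,d\langle B\rangle_s+\int_0^t\!\!\int_0^1 h_{xx}(s,X_s+\theta(\widetilde{X}_s-X_s))\,d\theta\, Z^h_s Y_s\,d\langle B\rangle_s.
\end{eqnarray*}
Subtracting the defining SDE of $P^x_t$ and inserting zero repeatedly, I would split each integrand into three pieces of the schematic form $b_x(s,\widetilde{X}_s)(Q^h_s-P^x_s)$, $(b_x(s,\widetilde{X}_s)-b_x(s,X_s))P^x_s$, and a ``$b_{xx}$-remainder''
\[\int_0^1 b_{xx}(s,X_s+\theta(\widetilde{X}_s-X_s))\,d\theta\, Y_s(Z^h_s-Y_s)\;+\;\int_0^1\bigl[b_{xx}(s,X_s+\theta(\widetilde{X}_s-X_s))-b_{xx}(s,X_s)\bigr]d\theta\, Y_s^2,\]
and analogously for $\sigma$ and $h$.

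Applying Lemma \ref{guji}, the Cauchy--Schwarz inequality under $\widehat{\mathbb{E}}$, and the boundedness of the first derivatives, the first piece contributes $C\int_0^t \widehat{\mathbb{E}}[\sup_{r\leq s}|Q^h_r-P^x_r|^2]\,ds$, which is exactly what is needed to close a Gronwall loop. The second piece is bounded using the polynomial-growth Lipschitz estimate on $b_x$ together with Proposition \ref{pr2} and the finite moments of $P^x$, yielding a term that tends to $0$ with $h$ through the inequality $2ab\leq\varepsilon^{-1}a^2+\varepsilon b^2$ exactly as in the proof of Theorem \ref{th1}. The delicate piece is the remainder: its first part is controlled by $\widehat{\mathbb{E}}[|Z^h_s-Y_s|^4]^{1/2}$ times moment bounds on $Y$ and on $b_{xx}$ evaluated along the convex combination, and this is where Remark \ref{re1} (the $L^4$ convergence of $Z^h$ to $Y$) is essential; the second part vanishes in the limit by dominated convergence, using the polynomial Lipschitz property of $b_{xx}$ and Proposition \ref{pr2}. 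Putting everything together and invoking Gronwall's inequality followed by $\varepsilon\to 0$ yields $\widehat{\mathbb{E}}[\sup_{t\leq T}|Q^h_t-P^x_t|^2]\to 0$, proving differentiability and identifying the derivative with $P^x_t$.

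The main obstacle is thus the remainder term containing $b_{xx},\sigma_{xx},h_{xx}$: it involves the \emph{product} $Z^h_s Y_s$, and naive bounds do not suffice to compare it to $Y_s^2$. The essential observation that makes the argument work is that $Z^h\to Y$ holds in $L^4$ (not merely $L^2$), so that splitting $Z^h_s Y_s-Y_s^2=Y_s(Z^h_s-Y_s)$ and applying Cauchy--Schwarz preserves enough integrability. Continuous dependence of $P^x$ on $x$ in $L^2_G$ (the ``continuously'' in the statement) is then proved by a parallel, but simpler, Gronwall argument applied to the difference of the SDEs satisfied by $P^{x}$ and $P^{x'}$, using Proposition \ref{pr2} and Theorem \ref{th1} to control the continuity of the inputs $X^x,Y^x$ in $x$.
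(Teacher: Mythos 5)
Your proposal follows essentially the same route as the paper: the identical decomposition of $Q^h_t$ into a $b_x(s,\widetilde{X}_s)Q^h_s$ part plus a $b_{xx}$-remainder involving $Z^h_sY_s$, the same three-way splitting against the candidate SDE for $P^x$, the same use of $2ab\leq\varepsilon^{-1}a^2+\varepsilon b^2$ with Propositions \ref{pr1}--\ref{pr2}, the crucial invocation of the $L^4$ convergence of $Z^h$ to $Y$ from Remark \ref{re1} to handle the product term, and the concluding Gronwall argument followed by $\varepsilon\to0$. The argument is correct; your added remarks on the well-posedness of the linear SDE for $P^x$ and on continuity in $x$ are sensible supplements that the paper leaves implicit.
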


\begin{proof}
Let $h\neq 0$ be small. We use the same notations as Theorem
\ref{th1}. For simplicity, we also put
\begin{eqnarray*}
P_t:=P^{x}_t,\ \widetilde{Y}_t:=Y^{x+h}_t, \
Q^{h}_t:=\frac{\widetilde{Y}_t-Y_t}{h}.
\end{eqnarray*}
Then we have
 \begin{eqnarray*}\label{}
Q_t^{h}&=&\frac{1}{h}\int_0^t [b_{x}(s,
\widetilde{X}_s)\widetilde{Y}_{s}-b_{x}(s,
X_s)Y_s]ds+\frac{1}{h}\int_0^t [\sigma_{x}(s,
\widetilde{X}_s)\widetilde{Y}_{s}-\sigma_{x}(s,
X_s)Y_s]dB_{s}\\
&&+\frac{1}{h}\int_0^t [h_{x}(s,
\widetilde{X}_s)\widetilde{Y}_{s}-h_{x}(s, X_s)Y_s]d\langle B
\rangle_{s}.
\end{eqnarray*}
Since $b_{xx}(t,\cdot),\sigma_{xx}(t,\cdot), h_{xx}(t,\cdot)\in
C_{l,lip}(\mathbb{R})$, for all $t\in[0,T]$,  we have
 \begin{eqnarray}\label{eq12}
Q_t^{h}=I_{1}+I_{2}+I_{3},
\end{eqnarray}
where
\begin{eqnarray*}\label{}
I_{1}&=&\int_0^t b_{x}(s, \widetilde{X}_s) Q_s^{h}ds+\int_0^t
\int_0^1 b_{xx}(s, X_s+\theta(\widetilde{X}_s-X_s))d\theta
Z_s^{h}Y_s ds, \\
I_{2}&=&\int_0^t \sigma_{x}(s, \widetilde{X}_s)
Q_s^{h}dB_{s}+\int_0^t \int_0^1 \sigma_{xx}(s,
X_s+\theta(\widetilde{X}_s-X_s))d\theta Z_s^{h}Y_sdB_{s},\\
I_{3}&=& \int_0^t h_{x}(s, \widetilde{X}_s) Q_s^{h}d\langle B
\rangle_{s}+\int_0^t \int_0^1 h_{xx}(s,
X_s+\theta(\widetilde{X}_s-X_s))d\theta Z_s^{h}Y_sd\langle B
\rangle_{s}.
\end{eqnarray*}
 Since $b_{x}(t,\cdot)\in C_{l,lip}(\mathbb{R})$, $t\in[0,T]$,
and is bounded, we have, for some $k\in\mathbb{N}$,
 \begin{eqnarray*}\label{}
&&\mathbb{\hat{E}}[\sup\limits_{t\in [0,T]}|\int_0^t b_{x}(s,
\widetilde{X}_s) Q_s^{h}ds-\int_0^t b_{x}(s,
X_s)P_sds|^{2}]\\
&\leq &2\mathbb{\hat{E}}[(\int_0^T  |b_{x}(s, \widetilde{X}_s)|
|Q_s^{h}-P_s| ds)^{2}]\\&&+2\mathbb{\hat{E}}[(\int_0^T |b_{x}(s,
\widetilde{X})- b_{x}(s, X_s)| |P_s| ds)^{2}]\\
&\leq &C\mathbb{\hat{E}}[\int_0^T |Q_s^{h}-P_s|^{2}
ds]\\&&+C\mathbb{\hat{E}}[(\int_0^T
(|\widetilde{X}_s-X_s|+|\widetilde{X}_s-X_s|^{k+1}+
|X_s|^{k}|\widetilde{X}_s-X_s|) |P_s| ds)^{2}].
\end{eqnarray*}
For all $\varepsilon>0,$ from
$2ab\leq\frac{1}{\varepsilon}a^{2}+\varepsilon b^{2}, a, b\geq 0,$
it follows that,
\begin{eqnarray*}\label{}
&&\mathbb{\hat{E}}[\sup\limits_{t\in [0,T]}|\int_0^t b_{x}(s,
\widetilde{X}_s) Q_s^{h}ds-\int_0^t b_{x}(s,
X_s)P_sds|^{2}]\\
&\leq &C\mathbb{\hat{E}}[\int_0^T |Q_s^{h}-P_s|^{2}
ds]+C\varepsilon\int_0^T
\mathbb{\hat{E}}[|P_s|^{4}]ds\\&&+\frac{C}{\varepsilon}\int_0^T
\mathbb{\hat{E}}[|\widetilde{X}_s-X_s|^{4}+|\widetilde{X}_s-X_s|^{4k+4}+|X_s|^{4k}|\widetilde{X}_s-X_s|^{4}]ds.
\end{eqnarray*}
Then by virtue of Proposition \ref{pr1} and Proposition \ref{pr2},
we obtain
\begin{eqnarray}\label{eq10}
&&\mathbb{\hat{E}}[\sup\limits_{t\in [0,T]}|\int_0^t b_{x}(s,
\widetilde{X}_s) Q_s^{h}ds-\int_0^t b_{x}(s,
X_s)P_sds|^{2}]\nonumber\\
 &\leq &C\int_0^T \mathbb{\hat{E}}[\sup\limits_{r\in [0,s]}|Q_r^{h}-P_r|^{2}]
ds+C\varepsilon+\frac{C}{\varepsilon} (h^{4}+h^{4k+4}).
\end{eqnarray}
Since $b_{xx}(t,\cdot)\in C_{l,lip}(\mathbb{R})$,  $t\in[0,T]$ and
is bounded, we have,  for some $l\in\mathbb{N}$,
 \begin{eqnarray*}\label{}
&&\mathbb{\hat{E}}[\sup\limits_{t\in [0,T]}|\int_0^t \int_0^1
b_{xx}(s, X_s+\theta(\widetilde{X}_s-X_s))d\theta Z_s^{h}Y_s ds
-\int_0^t b_{xx}(s,X_s)Y^{2}_sds|^{2}]\\
&\leq &2\mathbb{\hat{E}}[(\int_0^T \int_0^1 |b_{xx}(s,
X_s+\theta(\widetilde{X}_s-X_s))|d\theta |Z_s^{h}Y_s-Y^{2}_s| ds
)^{2}]\\&&+2\mathbb{\hat{E}}[(\int_0^T \int_0^1 |b_{xx}(s,
X_s+\theta(\widetilde{X}_s-X_s))-b_{xx}(s,X_s)|d\theta
Y^{2}_sds)^{2}]\\
&\leq & C\mathbb{\hat{E}}[(\int_0^T  |Z_s^{h}Y_s-Y^{2}_s| ds
)^{2}]\\&&+C\mathbb{\hat{E}}[(\int_0^T
(|\widetilde{X}_s-X_s|+|\widetilde{X}_s-X_s|^{l+1}+
|X_s|^{l}|\widetilde{X}_s-X_s|) Y^{2}_sds)^{2}].
\end{eqnarray*}
For all $\varepsilon>0,$ from
$2ab\leq\frac{1}{\varepsilon}a^{2}+\varepsilon b^{2}, a, b\geq 0,$
it follows that
 \begin{eqnarray*}\label{}
&&\mathbb{\hat{E}}[\sup\limits_{t\in [0,T]}|\int_0^t \int_0^1
b_{xx}(s, X_s+\theta(\widetilde{X}_s-X_s))d\theta Z_s^{h}Y_s ds
-\int_0^t b_{xx}(s,X_s)Y^{2}_sds|^{2}]\\&\leq
&\frac{C}{\varepsilon}\mathbb{\hat{E}}[\sup\limits_{t\in [0,T]}
|Z_s^{h}-Y_s| ^{4}]+C\varepsilon\int_0^T
\mathbb{\hat{E}}[|Y_s|^{4}]ds\\&&+\frac{C}{\varepsilon}\int_0^T
\mathbb{\hat{E}}[|\widetilde{X}_s-X_s|^{4}+|\widetilde{X}_s-X_s|^{4l+4}+|X_s|^{4l}|\widetilde{X}_s-X_s|^{4}]ds.
\end{eqnarray*}
Then by means of Proposition \ref{pr1} and Proposition \ref{pr2}, we
obtain
\begin{eqnarray}\label{eq11}
&&\mathbb{\hat{E}}[\sup\limits_{t\in [0,T]}|\int_0^t \int_0^1
b_{xx}(s, X_s+\theta(\widetilde{X}_s-X_s))d\theta Z_s^{h}Y_s ds
-\int_0^t b_{xx}(s,X_s)Y^{2}_sds|^{2}]\nonumber\\&\leq
&\frac{C}{\varepsilon}\mathbb{\hat{E}}[\sup\limits_{t\in [0,T]}
|Z_s^{h}-Y_s| ^{4}]+
 C\varepsilon+\frac{C}{\varepsilon} (h^{4}+h^{4l+4}).
\end{eqnarray}
From (\ref{eq10}) and (\ref{eq11}) it follows that
\begin{eqnarray*}\label{}
&&\mathbb{\hat{E}}[\sup\limits_{t\in [0,T]}|I_{1}-\int_0^t b_{x}(s,
X_s)P_sds-\int_0^t b_{xx}(s,X_s)Y^{2}_sds|^{2}]\nonumber\\
&\leq &C\int_0^T \mathbb{\hat{E}}[\sup\limits_{r\in
[0,s]}|Q_r^{h}-P_r|^{2}] ds+C\varepsilon+\frac{C}{\varepsilon}
(h^{4}+h^{4k+4}+h^{4l+4})+\frac{C}{\varepsilon}\mathbb{\hat{E}}[\sup\limits_{t\in
[0,T]} |Z_s^{h}-Y_s| ^{4}].
\end{eqnarray*}
Using similar arguments we obtain,  for some $m,n\in\mathbb{N}$,
\begin{eqnarray*}\label{}
&&\mathbb{\hat{E}}[\sup\limits_{t\in [0,T]}|I_{2}-\int_0^t
\sigma_{x}(s, X_s)P_sdB_{s}-\int_0^t \sigma_{xx}(s,X_s)Y^{2}_sdB_{s}|^{2}]\nonumber\\
&\leq &C\int_0^T \mathbb{\hat{E}}[\sup\limits_{r\in
[0,s]}|Q_r^{h}-P_r|^{2}]ds+C\varepsilon+\frac{C}{\varepsilon}
(h^{4}+h^{4m+4}+h^{4n+4})+\frac{C}{\varepsilon}\mathbb{\hat{E}}[\sup\limits_{t\in
[0,T]} |Z_s^{h}-Y_s| ^{4}].
\end{eqnarray*}
and for some $p, q\in\mathbb{N}$,
\begin{eqnarray*}\label{}
&&\mathbb{\hat{E}}[\sup\limits_{t\in [0,T]}|I_{3}-\int_0^t h_{x}(s,
X_s)P_sd\langle B\rangle_{s}-\int_0^th_{xx}(s,X_s)Y^{2}_sd\langle
B\rangle_{s}|^{2}]\nonumber\\
&\leq &C\int_0^T \mathbb{\hat{E}}[\sup\limits_{r\in
[0,s]}|Q_r^{h}-P_r|^{2}]ds+C\varepsilon+\frac{C}{\varepsilon}
(h^{4}+h^{4p+4}+h^{4q+4})+\frac{C}{\varepsilon}\mathbb{\hat{E}}[\sup\limits_{t\in
[0,T]} |Z_s^{h}-Y_s| ^{4}].
\end{eqnarray*}
Then  the above inequalities yield
\begin{eqnarray*}\label{}
&&\mathbb{\hat{E}}[\sup\limits_{t\in [0,T]}|Q_t^{h}-P_t|^{2}]\\
&\leq &C\int_0^T \mathbb{\hat{E}}[\sup\limits_{r\in
[0,s]}|Q_r^{h}-P_r|^{2}]ds+\frac{C}{\varepsilon}\mathbb{\hat{E}}[\sup\limits_{t\in
[0,T]} |Z_s^{h}-Y_s| ^{4}]\\
&&+C\varepsilon+\frac{C}{\varepsilon}
(h^{4}+h^{4k+4}+h^{4l+4}+h^{4m+4}+h^{4n+4}+h^{4p+4}+h^{4q+4}),
\end{eqnarray*}
and Gronwall's inequality yields
\begin{eqnarray*}\label{}
\mathbb{\hat{E}}[\sup\limits_{t\in [0,T]}|Q_t^{h}-P_t|^{2}]&\leq&
C\varepsilon+\frac{C}{\varepsilon}\mathbb{\hat{E}}[\sup\limits_{t\in
[0,T]} |Z_s^{h}-Y_s| ^{4}]\\&&+\frac{C}{\varepsilon}
(h^{4}+h^{4k+4}+h^{4l+4}+h^{4m+4}+h^{4n+4}+h^{4p+4}+h^{4q+4}).
\end{eqnarray*}
By Remark \ref{re1}  we get
\begin{eqnarray*}\label{}
\lim\limits_{h\rightarrow0}\mathbb{\hat{E}}[\sup\limits_{t\in
[0,T]}|Q_t^{h}-P_t|^{2}]\leq C\varepsilon.
\end{eqnarray*}
Therefore,
\begin{eqnarray}\label{eq13}
\lim\limits_{h\rightarrow0}\mathbb{\hat{E}}[\sup\limits_{t\in
[0,T]}|Q_t^{h}-P_t|^{2}]=0.
\end{eqnarray}
  The proof is complete.
\end{proof}

\section{ Differentiability of solutions of SDEs  driven by the $G$-Brownian motion with respect to the parameter}
The aim of this section is to study  the differentiability of
solutions of stochastic differential equations  driven by the
$G$-Brownian motion with respect to the parameter.  We consider the
following stochastic differential equation depending on the
parameter:
\begin{equation}\label{equ10}
X_t^{\alpha}=x(\alpha)+\int_0^t b(\alpha, s,
X_s^{\alpha})ds+\int_0^t \sigma(\alpha, s,
X_s^{\alpha})dB_{s}+\int_0^t h(\alpha, s, X_s^{\alpha})d\langle
B\rangle_{s}, \quad  t\in [0,T],
\end{equation}
where the initial condition $\alpha\mapsto x(\alpha):
\mathbb{R}\mapsto \mathbb{R}$ is given. Let us make the following
assumptions:

\begin{list}{}{\setlength{\itemindent}{0cm}}
\item[(H4.1)]  The  functions $x, b, \sigma, h: \mathbb{R}\times[0,T]\times\mathbb{R}\mapsto \mathbb{R}$
 are  Lipschitz in $x$ and $\alpha$, uniformly with respect to $t\in[0,T]$.

\end{list}

\begin{proposition}
Under the assumption (H4.1), $\{X_t^{\alpha}\}$ is continuous in
$L_{G}^{2}$ with respect to $\alpha$.
\end{proposition}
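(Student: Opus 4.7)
The plan is to show the stronger quantitative estimate $\hat{\mathbb{E}}[|X_t^{\alpha}-X_t^{\alpha'}|^2] \le C|\alpha-\alpha'|^2$ uniformly in $t\in[0,T]$, from which continuity in $L_G^2$ with respect to $\alpha$ is immediate. The strategy mirrors the standard uniqueness-type argument already used implicitly throughout the paper: subtract the two equations, apply the basic moment estimates of Lemma \ref{guji}, exploit the joint Lipschitz hypothesis (H4.1) on the coefficients, and close the estimate with Gronwall.

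First I would fix $\alpha,\alpha'\in\mathbb{R}$ and write
\begin{eqnarray*}
X_t^{\alpha}-X_t^{\alpha'} &=& \bigl(x(\alpha)-x(\alpha')\bigr) + \int_0^t \bigl[b(\alpha,s,X_s^{\alpha})-b(\alpha',s,X_s^{\alpha'})\bigr]\,ds \\
&& {} + \int_0^t \bigl[\sigma(\alpha,s,X_s^{\alpha})-\sigma(\alpha',s,X_s^{\alpha'})\bigr]\,dB_s \\
&& {} + \int_0^t \bigl[h(\alpha,s,X_s^{\alpha})-h(\alpha',s,X_s^{\alpha'})\bigr]\,d\langle B\rangle_s.
\end{eqnarray*}
Squaring and using the elementary inequality $(a_1+\cdots+a_4)^2\le 4\sum a_i^2$ together with the sublinearity (self-dominated property) of $\hat{\mathbb{E}}$, I would estimate the $G$-expectation of each of the four pieces separately. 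For the $ds$, $d\langle B\rangle_s$ and $dB_s$ integrals I apply Lemma \ref{guji} with $p=2$ to pull the $\hat{\mathbb{E}}$ inside the time integral, and then use (H4.1), which yields
\[
|b(\alpha,s,X_s^{\alpha})-b(\alpha',s,X_s^{\alpha'})| \le L\bigl(|\alpha-\alpha'|+|X_s^{\alpha}-X_s^{\alpha'}|\bigr),
\]
and analogously for $\sigma$ and $h$; the initial difference is controlled by $|x(\alpha)-x(\alpha')|\le L|\alpha-\alpha'|$.

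Combining these bounds gives an inequality of the form
\[
\hat{\mathbb{E}}\bigl[|X_t^{\alpha}-X_t^{\alpha'}|^2\bigr] \le C|\alpha-\alpha'|^2 + C\int_0^t \hat{\mathbb{E}}\bigl[|X_s^{\alpha}-X_s^{\alpha'}|^2\bigr]\,ds,
\]
where $C=C(L,T)$. Note that the function $s\mapsto \hat{\mathbb{E}}[|X_s^{\alpha}-X_s^{\alpha'}|^2]$ is finite (by an argument analogous to Proposition \ref{pr1}, or by Proposition \ref{pr2} applied with varying initial data plus a further Lipschitz perturbation) and bounded on $[0,T]$, so Gronwall's inequality applies and yields
\[
\sup_{t\in[0,T]} \hat{\mathbb{E}}\bigl[|X_t^{\alpha}-X_t^{\alpha'}|^2\bigr] \le C|\alpha-\alpha'|^2.
\]
Letting $\alpha'\to\alpha$ completes the proof.

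I do not expect a genuine obstacle here: the only subtle point is that, because $b,\sigma,h$ now depend on the parameter $\alpha$, the Lipschitz bound splits into an $|\alpha-\alpha'|$ part (which feeds directly into the inhomogeneous term of the Gronwall inequality) and an $|X_s^{\alpha}-X_s^{\alpha'}|$ part (which produces the integral term). Once this split is made correctly, the argument is just a parametric version of the standard SDE $L_G^2$-stability estimate already used in the previous sections.
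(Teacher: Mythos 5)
Your proposal is correct and follows essentially the same route as the paper: subtract the two equations, bound the three integral terms via Lemma \ref{guji} with $p=2$ and the sublinearity of $\mathbb{\hat{E}}$, split the Lipschitz bound from (H4.1) into an $|\alpha-\alpha'|$ part and an $|X_s^{\alpha}-X_s^{\alpha'}|$ part, and close with Gronwall to get $\mathbb{\hat{E}}[|X_t^{\alpha}-X_t^{\alpha'}|^{2}]\leq C|\alpha-\alpha'|^{2}$. The paper's proof is exactly this quantitative estimate followed by the observation that continuity is immediate.
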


\begin{proof}
Let $\alpha, \beta \in\mathbb{R},$ and $t\in[0,T]$.  From the
stochastic differential equation (\ref{equ10}), Lemma \ref{guji},
the subadditivity of $G$-expectation and (H4.1), we have
\begin{eqnarray*}
\mathbb{\hat{E}}[|X_t^{\alpha}-X_t^{\beta}|^{2}]
 &\leq &4 \Big(|x(\alpha)-x(\beta)|^{2}+\mathbb{\hat{E}}\Big[|\int_0^t
\Big[b(\alpha,X_s^{\alpha})-b(\beta, X_s^{\beta})\Big]ds|^{2}\Big]\\
&&+\mathbb{\hat{E}}\Big[|\int_0^t \Big[\sigma(\alpha,
X_s^{\alpha})-\sigma(\beta, X_s^{\beta})\Big]dB_{s}|^{2}\Big]
\\&&+\mathbb{\hat{E}}\Big[|
\int_0^t\Big[h(\alpha, X_s^{\alpha})- h(\beta,
X_s^{\beta})\Big]d\langle B\rangle_{s}|^{2}\Big]\Big)\\
&\leq &C\Big(|\alpha-\beta|^{2}+\int_0^t
\mathbb{\hat{E}}\Big[|X_s^{\alpha}-X_s^{\beta}|^{2}\Big]ds\Big).
\end{eqnarray*}
Thanks to Gronwall's inequality we  get
\begin{eqnarray}\label{equ12}
\mathbb{\hat{E}}[|X_t^{\alpha}-X_t^{\beta}|^{2}] &\leq
&C|\alpha-\beta|^{2},\ t\in[0,T],
\end{eqnarray}
then we get the desired result.
\end{proof}

\begin{proposition}\label{pr3}
Under  the assumption (H4.1), we have  the following estimate for
the solution $X$ of SDE (\ref{equ10}): for all $p\geq0$ and $r\in
[0,T]$,
$$\mathbb{\hat{E}}[\sup\limits_{t\in[0,r]}|X_{t}^{\alpha}|^{p}]\leq C<+\infty.$$
Here, the constant $C=C(p,r,\alpha,L)$.
\end{proposition}

\begin{proposition}\label{pr4}
 Let us assume (H4.1). Then for every $p\geq2$, there exists a positive constant $C$ such
that
$$\mathbb{\hat{E}}[\sup\limits_{t\in[0,T]}|X_t^{\alpha}-X_{t}^{\beta}|^{p}]
\leq C|\alpha-\beta|^{p}, \ \text{for all} \ t\in [0,T].$$ Here
$C=C(p,T, L).$
\end{proposition}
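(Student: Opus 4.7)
The plan is to mimic the proof of Proposition \ref{pr1}, but now applied to the difference $X_t^\alpha - X_t^\beta$, using both components of the Lipschitz assumption (H4.1): Lipschitz in the parameter $\alpha$ (which produces the factor $|\alpha-\beta|^p$) and Lipschitz in the spatial variable $x$ (which produces the integrand $|X_s^\alpha - X_s^\beta|^p$ needed for Gronwall). Note that the $L^2$-type bound \eqref{equ12} has already been established without a supremum inside; the new ingredient is to bring the $\sup_{t\in[0,T]}$ inside and handle $p\ge 2$.

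First I would subtract the two SDEs and obtain
\[
X_t^\alpha - X_t^\beta = (x(\alpha)-x(\beta)) + \int_0^t\!\Delta b_s\,ds + \int_0^t\!\Delta\sigma_s\,dB_s + \int_0^t\!\Delta h_s\,d\langle B\rangle_s,
\]
where $\Delta b_s := b(\alpha,s,X_s^\alpha) - b(\beta,s,X_s^\beta)$, and analogously for $\Delta\sigma_s, \Delta h_s$. Taking absolute values, raising to the $p$-th power, and using the elementary inequality $(a_1+a_2+a_3+a_4)^p \le 4^{p-1}\sum a_i^p$ together with the subadditivity and monotonicity of $\hat{\mathbb{E}}$, I then apply Lemma \ref{guji} to each of the three stochastic/pathwise integrals to get
\[
\hat{\mathbb{E}}\!\left[\sup_{t\in[0,r]}|X_t^\alpha - X_t^\beta|^p\right]
\le C\,|x(\alpha)-x(\beta)|^p + C\int_0^r \hat{\mathbb{E}}\!\left[|\Delta b_s|^p + |\Delta\sigma_s|^p + |\Delta h_s|^p\right]ds,
\]
with $C = C(p,T)$ absorbing the constants $r^{p-1}$, $C_p r^{p/2-1}$, etc.

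Next I use (H4.1): Lipschitz continuity in $(\alpha,x)$ gives, e.g.,
$|\Delta b_s|^p \le C\bigl(|\alpha-\beta|^p + |X_s^\alpha - X_s^\beta|^p\bigr)$, and similarly for $\Delta\sigma_s$ and $\Delta h_s$; Lipschitz continuity of $x(\cdot)$ gives $|x(\alpha)-x(\beta)|^p \le C|\alpha-\beta|^p$. Bounding the pointwise term by the supremum, this yields
\[
\hat{\mathbb{E}}\!\left[\sup_{t\in[0,r]}|X_t^\alpha - X_t^\beta|^p\right] \le C|\alpha-\beta|^p + C\int_0^r \hat{\mathbb{E}}\!\left[\sup_{u\in[0,s]}|X_u^\alpha - X_u^\beta|^p\right]ds.
\]
An application of Gronwall's inequality (valid because by Proposition \ref{pr3} the map $s \mapsto \hat{\mathbb{E}}[\sup_{u\le s}|X_u^\alpha - X_u^\beta|^p]$ is finite on $[0,T]$) then gives the claim with $C=C(p,T)$.

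The only genuine subtlety is making sure the integrands $|\Delta b_s|^p$, $|\Delta\sigma_s|^p$, $|\Delta h_s|^p$ are admissible in $M_G^1(0,T)$ so that Lemma \ref{guji} applies; this is guaranteed by Proposition \ref{pr3} (which provides finite $p$-moments of $X^\alpha$ and $X^\beta$) together with the Lipschitz bound. Everything else is routine, parallel to the proof of Proposition \ref{pr1}.
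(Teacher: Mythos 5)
Your argument is correct and is precisely the route the paper intends: the paper omits the proof of Proposition \ref{pr4}, stating only that it is similar to that of Proposition \ref{pr1}, and your write-up is exactly that adaptation (subtract the two equations, apply Lemma \ref{guji} and the Lipschitz bounds in both $\alpha$ and $x$ from (H4.1), then Gronwall). No discrepancy with the paper's approach.
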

The proofs of the above propositions are similar to the proof of
Proposition \ref{pr1}. We omit it here.

\begin{theorem}\label{th4}
For $t\in[0,T]$,   if $b_{x}(\cdot, t, \cdot),\sigma_{x}(\cdot, t,
\cdot), h_{x}(\cdot, t, \cdot), b_{\alpha}(\cdot, t,
\cdot),\sigma_{\alpha}(\cdot, t, \cdot), h_{\alpha}(\cdot, t,
\cdot)\in C_{l,lip}(\mathbb{R}^{2})$, $b_{x}(\cdot, t,
\cdot),\sigma_{x}(\cdot, t, \cdot), h_{x}(\cdot, t, \cdot)$ are
bounded and $x(\cdot)$ is differentiable, then $X^{\alpha}_{t}$ is
differentiable in $L_{G}^{2}$ with respect to $\alpha$. Moreover,
for all $ t\in [0,T], Y_{t}^{\alpha}:=\frac{\partial
X^{\alpha}_{t}}{\partial \alpha}$ satisfies the following stochastic
differential equation
\begin{eqnarray*}\label{}
Y_t^{\alpha}&=&x'(\alpha)+\int_0^t b_{x}(\alpha, s,
X_s^{\alpha})Y_s^{\alpha}ds+\int_0^t \sigma_{x}(\alpha, s,
X_s^{\alpha})Y_s^{\alpha}dB_{s}+\int_0^t h_{x}(\alpha, s,
X_s^{\alpha})Y_s^{\alpha}d\langle B\rangle_{s}\\
&&+\int_0^t b_{\alpha}(\alpha, s, X_s^{\alpha})ds+\int_0^t
\sigma_{\alpha}(\alpha, s, X_s^{\alpha})dB_{s}+\int_0^t
h_{\alpha}(\alpha, s, X_s^{\alpha})d\langle B\rangle_{s}.
\end{eqnarray*}
\end{theorem}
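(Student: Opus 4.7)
The plan is to mimic the argument of Theorem~\ref{th1}, modified to accommodate the explicit dependence on the parameter $\alpha$. The first step is to show that the candidate linear SDE for $Y^\alpha$ admits a unique solution in $M_G^2(0,T)$: since $b_x,\sigma_x,h_x$ are bounded, the equation has Lipschitz coefficients in the unknown, and the inhomogeneous terms $b_\alpha(\alpha,s,X_s^\alpha),\sigma_\alpha(\alpha,s,X_s^\alpha),h_\alpha(\alpha,s,X_s^\alpha)$ belong to $M_G^2(0,T)$ thanks to the polynomial growth inherited from $C_{l,lip}(\mathbb{R}^2)$ together with Proposition~\ref{pr3}. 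Existence and uniqueness then follow from Lemma~\ref{weiyi}.

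Next, for $h\neq 0$ small, set $\widetilde X_t:=X_t^{\alpha+h}$, $X_t:=X_t^\alpha$ and $Z_t^h:=(\widetilde X_t-X_t)/h$. For each coefficient $f\in\{b,\sigma,h\}$ I would apply the finite-increment identity
\begin{eqnarray*}
f(\alpha+h,s,\widetilde X_s)-f(\alpha,s,X_s)
&=& h\int_0^1 f_\alpha(\alpha+\theta h,s,X_s+\theta(\widetilde X_s-X_s))\,d\theta \\
&& +(\widetilde X_s-X_s)\int_0^1 f_x(\alpha+\theta h,s,X_s+\theta(\widetilde X_s-X_s))\,d\theta.
\end{eqnarray*}
Substituting into~(\ref{equ10}) and dividing by $h$ yields an SDE for $Z_t^h$ of the same structure as the equation for $Y_t^\alpha$ stated in the theorem, except that the coefficients are evaluated at the shifted arguments $(\alpha+\theta h,s,X_s+\theta(\widetilde X_s-X_s))$ and the initial condition is $(x(\alpha+h)-x(\alpha))/h$ in place of $x'(\alpha)$.

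The remaining work is to prove $\mathbb{\hat E}[\sup_{t\in[0,T]}|Z_t^h-Y_t^\alpha|^2]\to 0$. Subtracting the two SDEs and splitting each integrand into a principal part, e.g.\ $f_x(\alpha,s,X_s)(Z_s^h-Y_s^\alpha)$, and a residual part, e.g.\ $(\int_0^1 f_x(\alpha+\theta h,s,X_s+\theta(\widetilde X_s-X_s))\,d\theta-f_x(\alpha,s,X_s))Z_s^h$ and its $f_\alpha$-analogue, Lemma~\ref{guji} reduces every stochastic and $d\langle B\rangle$-integral to a time-integral of $L_G^2$-norms. For the residuals, the $C_{l,lip}(\mathbb{R}^2)$-hypothesis gives a polynomial Lipschitz bound in both variables, and Propositions~\ref{pr3}--\ref{pr4} yield $\mathbb{\hat E}[\sup_s|\widetilde X_s-X_s|^p]\leq C|h|^p$; combined with $2ab\leq \varepsilon^{-1}a^2+\varepsilon b^2$ this produces, for some nonnegative integers $k_i$,
\begin{eqnarray*}
\mathbb{\hat E}\Big[\sup_{t\in[0,r]}|Z_t^h-Y_t^\alpha|^2\Big]
\leq C\int_0^r\mathbb{\hat E}\Big[\sup_{u\in[0,s]}|Z_u^h-Y_u^\alpha|^2\Big]\,ds
+ C\varepsilon + \frac{C}{\varepsilon}\sum_i\bigl(|h|^{4k_i+4}+|h|^4\bigr),
\end{eqnarray*}
together with a $\bigl|(x(\alpha+h)-x(\alpha))/h-x'(\alpha)\bigr|^2$ contribution that vanishes by differentiability of $x(\cdot)$. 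Gronwall's inequality, followed by $h\to 0$ and then $\varepsilon\to 0$, delivers the conclusion.

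The principal obstacle is the control of the residual terms arising from the $f_\alpha$ integrals, which had no analogue in Theorem~\ref{th1}. Here the integrand $f_\alpha(\alpha+\theta h,s,X_s+\theta(\widetilde X_s-X_s))$ depends simultaneously on $h$ through its first argument and on $\widetilde X_s-X_s$ through its third, so convergence to $f_\alpha(\alpha,s,X_s)$ must be extracted from the joint polynomial-Lipschitz property in $C_{l,lip}(\mathbb{R}^2)$ combined with the quantitative stability estimate of Proposition~\ref{pr4}. Once this extra error term is shown to vanish in $L_G^2$, the argument closes in exactly the same manner as in Theorem~\ref{th1}.
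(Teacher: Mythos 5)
Your proposal is correct and follows essentially the same strategy as the paper: form the difference quotient $Z^h$, represent the coefficient increments by integral mean-value formulas, split each term into a principal part plus a residual controlled via the $C_{l,lip}$ polynomial-Lipschitz bound, Propositions~\ref{pr3}--\ref{pr4}, the inequality $2ab\leq\varepsilon^{-1}a^2+\varepsilon b^2$ and Lemma~\ref{guji}, and close with Gronwall, $h\to0$, then $\varepsilon\to0$. The only (harmless) deviations are that you increment $(\alpha,x)$ simultaneously along a diagonal path where the paper first increments $\alpha$ at the state $\widetilde X_s$ and then increments the state at fixed $\alpha$, and that you make explicit the well-posedness of the limiting SDE for $Y^\alpha$, which the paper leaves implicit.
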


\begin{proof}
Let $h\neq 0$ be small. For simplicity, we put
\begin{eqnarray*}
X_t:=X_t^{\alpha},\ Y_t:=Y_t^{\alpha}, \
\widetilde{X}_t:=X_t^{\alpha+h}, \
Z^{h}_t:=\frac{\widetilde{X}_t-X_t}{h}.
\end{eqnarray*}
Then we have
 \begin{eqnarray}\label{e3}
Z_t^{h}&=&\frac{x(\alpha+h)-x(\alpha)}{h}+\frac{1}{h}\int_0^t
[b(\alpha+h, s, \widetilde{X}_s)-b(\alpha, s, X_s)]ds
\nonumber\\&&+\frac{1}{h}\int_0^t [\sigma(\alpha+h, s,
\widetilde{X}_s)-\sigma(\alpha, s,
X_s)]dB_{s}\nonumber\\&&+\frac{1}{h}\int_0^t [h(\alpha+h, s,
\widetilde{X}_s)-h(\alpha, s, X_s)]d\langle B
\rangle_{s}\\
&=&\frac{x(\alpha+h)-x(\alpha)}{h}+I_{1}+I_{2}+I_{3},\nonumber
\end{eqnarray}
where
 \begin{eqnarray*}\label{}
I_{1}&=&\frac{1}{h}\int_0^t [b(\alpha+h, s,
\widetilde{X}_s)-b(\alpha, s,
\widetilde{X}_s)]ds+\frac{1}{h}\int_0^t [b(\alpha, s,
\widetilde{X}_s)-b(\alpha, s, X_s)]ds,\\
I_{2}&=&\frac{1}{h}\int_0^t [\sigma(\alpha+h, s,
\widetilde{X}_s)-\sigma(\alpha, s,
\widetilde{X}_s)]dB_{s}+\frac{1}{h}\int_0^t [\sigma(\alpha, s,
\widetilde{X}_s)-\sigma(\alpha, s, X_s)]dB_{s},\\
I_{3}&=&\frac{1}{h}\int_0^t [h(\alpha+h, s,
\widetilde{X}_s)-h(\alpha, s, \widetilde{X}_s)]d\langle B
\rangle_{s}+\frac{1}{h}\int_0^t [h(\alpha, s,
\widetilde{X}_s)-h(\alpha, s, X_s)]d\langle B \rangle_{s}.
\end{eqnarray*}
Since  $b_{x}(\cdot, t, \cdot), b_{\alpha}(\cdot, t, \cdot)\in
C_{l,lip}(\mathbb{R}^{2})$, $t\in[0,T]$, we have
 \begin{eqnarray*}\label{}
I_{1}&=&\int_0^t \int_0^1 b_{\alpha}(\alpha+\theta h, s,
\widetilde{X}_s)d\theta ds+\int_0^t \int_0^1 b_{x}(\alpha, s,
X_s+\theta(\widetilde{X}_s-X_s))d\theta Z_s^{h} ds.
\end{eqnarray*}
Therefore, for some $k\in\mathbb{N}$,
 \begin{eqnarray*}\label{}
&&\mathbb{\hat{E}}[\sup\limits_{t\in [0,T]}|\int_0^t \int_0^1
b_{\alpha}(\alpha+\theta h, s, \widetilde{X}_s)d\theta ds-\int_0^t
b_{\alpha}(\alpha, s, X_s)ds|^{2}]\\
&\leq &C\mathbb{\hat{E}}[\int_0^T \int_0^1| b_{\alpha}(\alpha+\theta
h, s, \widetilde{X}_s)-b_{\alpha}(\alpha, s,
\widetilde{X}_s)|^{2}d\theta ds]\\&&+C\mathbb{\hat{E}}[\int_0^T |
b_{\alpha}(\alpha, s, \widetilde{X}_s)-b_{\alpha}(\alpha, s,
X_s)|^{2}ds]\\
&\leq&C(h^{2}+h^{2k+2})\\&&+C\int_0^T
\mathbb{\hat{E}}[(|\widetilde{X}_s-X_s|+|\widetilde{X}_s-X_s|^{k+1}+
|X_s|^{k}|\widetilde{X}_s-X_s|)^{2}]ds,
\end{eqnarray*}
and by Proposition \ref{pr3} and Proposition \ref{pr4}  we obtain
 \begin{eqnarray*}\label{}
&&\mathbb{\hat{E}}[\sup\limits_{t\in [0,T]}|\int_0^t \int_0^1
b_{\alpha}(\alpha+\theta h, s, \widetilde{X}_s)d\theta ds-\int_0^t
b_{\alpha}(\alpha, s, X_s)ds|^{2}]\\
&\leq&C(h^{2}+h^{2k+2}).
\end{eqnarray*}
Using the  argument similar to the proof of Theorem \ref{th1}, we
get for some $l\in\mathbb{N}$,
\begin{eqnarray*}\label{}
&&\mathbb{\hat{E}}[\sup\limits_{t\in [0,T]}|\int_0^t \int_0^1
b_{x}(\alpha, s, X_s+\theta(\widetilde{X}_s-X_s))d\theta Z_s^{h}
ds-\int_0^t b_{x}(\alpha, s, X_s)Y_s ds|^{2}]\\
&\leq &C \int_0^T \mathbb{\hat{E}}[\sup\limits_{r\in
[0,s]}|Z_r^{h}-Y_r|^{2}]ds+C\varepsilon+\frac{C}{\varepsilon}
(h^{4}+h^{4l+4}).
\end{eqnarray*}
Consequently,
\begin{eqnarray*}\label{}
&&\mathbb{\hat{E}}[\sup\limits_{t\in [0,T]}|I_{1}-\int_0^t
b_{\alpha}(\alpha, s, X_s)ds-\int_0^t b_{x}(\alpha, s, X_s)Y_s ds|^{2}]\\
&\leq &C \int_0^T \mathbb{\hat{E}}[\sup\limits_{r\in
[0,s]}|Z_r^{h}-Y_r|^{2}]ds+C\varepsilon+\frac{C}{\varepsilon}
(h^{4}+h^{4l+4})+C(h^{2}+h^{2l+2}).
\end{eqnarray*}
By similar arguments, we have for some $m,n\in\mathbb{N}$,
\begin{eqnarray*}\label{}
&&\mathbb{\hat{E}}[\sup\limits_{t\in [0,T]}|I_{2}-\int_0^t
\sigma_{x}(s, X_s)Y_sdB_{s}-\int_0^t
\sigma_{\alpha}(\alpha, s, X_s)dB_{s}|^{2}]\nonumber\\
&\leq &C \int_0^T \mathbb{\hat{E}}[\sup\limits_{r\in
[0,s]}|Z_r^{h}-Y_r|^{2}]ds+C\varepsilon+\frac{C}{\varepsilon}
(h^{4}+h^{4n+4})+C(h^{2}+h^{2m+2}).
\end{eqnarray*}
and for some $p,q\in\mathbb{N}$,
\begin{eqnarray*}\label{}
&&\mathbb{\hat{E}}[\sup\limits_{t\in [0,T]}|I_{3}-\int_0^t h_{x}(s,
X_s)Y_sd\langle B\rangle_{s}-\int_0^t
h_{\alpha}(\alpha, s, X_s)d\langle B\rangle_{s}|^{2}]\\
&\leq &C \int_0^T \mathbb{\hat{E}}[\sup\limits_{r\in
[0,s]}|Z_r^{h}-Y_r|^{2}]ds+C\varepsilon+\frac{C}{\varepsilon}
(h^{4}+h^{4q+4})+C(h^{2}+h^{2p+2}).
\end{eqnarray*}
Then the above inequalities   yield
\begin{eqnarray*}\label{}
&&\mathbb{\hat{E}}[\sup\limits_{t\in [0,T]}|Z_t^{h}-Y_t|^{2}]\nonumber\\
&\leq &C \int_0^T \mathbb{\hat{E}}[\sup\limits_{r\in
[0,s]}|Z_r^{h}-Y_r|^{2}]ds+C\varepsilon+\frac{C}{\varepsilon}
(h^{4}+h^{4l+4}+h^{4n+4}+h^{4q+4})\\&&+C(h^{2}+h^{2k+2}+h^{2m+2}+h^{2p+2}).
\end{eqnarray*}
Therefore, from Gronwall's inequality it follows that
\begin{eqnarray*}\label{}
\mathbb{\hat{E}}[\sup\limits_{t\in [0,T]}|Z_t^{h}-Y_t|^{2}]&\leq&
C\varepsilon+\frac{C}{\varepsilon}
(h^{4}+h^{4l+4}+h^{4n+4}+h^{4q+4})\\&&+C(h^{2}+h^{2k+2}+h^{2m+2}+h^{2p+2}).
\end{eqnarray*}
Letting $h\rightarrow0$, we get
\begin{eqnarray*}\label{}
\lim\limits_{h\rightarrow0}\mathbb{\hat{E}}[\sup\limits_{t\in
[0,T]}|Z_t^{h}-Y_t|^{2}]\leq C\varepsilon.
\end{eqnarray*}
Therefore,
\begin{eqnarray}\label{eq117}
\lim\limits_{h\rightarrow0}\mathbb{\hat{E}}[\sup\limits_{t\in
[0,T]}|Z_t^{h}-Y_t|^{2}]=0.
\end{eqnarray}
  The proof is complete.
\end{proof}

\begin{remark}\label{re2}
We can check that the following holds true in the proof of the above
theorem, which we will use in what follows.
\begin{eqnarray*}\label{}
\lim\limits_{h\rightarrow0}\mathbb{\hat{E}}[\sup\limits_{t\in
[0,T]}|Z_t^{h}-Y_t|^{4}]=0.
\end{eqnarray*}
\end{remark}

\begin{proposition}\label{}
Under the conditions of Theorem \ref{th4}, if $x(\cdot)$ is
continuously differentiable, then $\frac{\partial
X^{\alpha}_{t}}{\partial \alpha}$ is continuous with respect to $t$
in $L_{G}^{2}$.
\end{proposition}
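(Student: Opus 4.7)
The plan is to leverage the linear SDE that $Y_t^{\alpha}$ was shown to satisfy in Theorem \ref{th4} and to obtain an explicit H\"older-type bound on $t\mapsto Y_t^{\alpha}$ in $L_G^{2}$. The argument proceeds in two stages, mimicking the strategy of Theorem \ref{th2} in Section 3 but applied directly to the SDE for $Y^\alpha$ rather than to difference quotients.

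First, I would establish the a priori estimate
$$
\sup_{t\in[0,T]}\mathbb{\hat{E}}[|Y_t^{\alpha}|^{2}]\leq C<\infty.
$$
Squaring the SDE for $Y^\alpha$ and using the elementary inequality $(a_0+\cdots+a_6)^2\leq 7\sum a_i^2$ together with Lemma \ref{guji}, the three ``multiplicative'' terms $\int_0^t b_x(\alpha,s,X_s^{\alpha})Y_s^{\alpha}\,ds$, $\int_0^t \sigma_x(\alpha,s,X_s^{\alpha})Y_s^{\alpha}\,dB_s$ and $\int_0^t h_x(\alpha,s,X_s^{\alpha})Y_s^{\alpha}\,d\langle B\rangle_s$ contribute at most $C\int_0^t \mathbb{\hat{E}}[|Y_s^{\alpha}|^{2}]\,ds$, thanks to the boundedness of $b_x,\sigma_x,h_x$. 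For the three ``inhomogeneous'' terms involving $b_\alpha,\sigma_\alpha,h_\alpha$, these functions are in $C_{l,lip}(\mathbb{R}^{2})$, so combined with Proposition \ref{pr3} (moment bounds of arbitrary order on $X_s^{\alpha}$) we obtain
$$
\sup_{s\in[0,T]}\mathbb{\hat{E}}[|b_\alpha(\alpha,s,X_s^{\alpha})|^{2}+|\sigma_\alpha(\alpha,s,X_s^{\alpha})|^{2}+|h_\alpha(\alpha,s,X_s^{\alpha})|^{2}]<\infty,
$$
which gives a finite additive constant. Gronwall's inequality then yields the desired uniform bound.

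Second, for $0\leq r\leq t\leq T$, I would write $Y_t^{\alpha}-Y_r^{\alpha}$ as the sum of the six integrals in the SDE restricted to $[r,t]$, and apply the subinterval versions of the estimates in Lemma \ref{guji}. The multiplicative terms together with the uniform bound on $\mathbb{\hat{E}}[|Y_s^{\alpha}|^{2}]$ and the boundedness of $b_x,\sigma_x,h_x$ produce contributions of order $(t-r)^{2}$ (for $ds$ and $d\langle B\rangle_s$ integrals) or $(t-r)$ (for the $dB_s$ integral); the inhomogeneous terms produce contributions of the same orders via the squared-moment bound on $b_\alpha,\sigma_\alpha,h_\alpha$. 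Summing,
$$
\mathbb{\hat{E}}[|Y_t^{\alpha}-Y_r^{\alpha}|^{2}]\leq K(t-r),
$$
which tends to zero as $t\to r$, proving the continuity in $L_G^{2}$.

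The main obstacle is the a priori estimate in the first stage, specifically the control of the inhomogeneous driving terms: since $b_\alpha,\sigma_\alpha,h_\alpha$ are only locally Lipschitz in $(\alpha,x)$, one cannot appeal to boundedness and must instead combine their polynomial growth with the moment estimates of Proposition \ref{pr3}. Once this uniform bound on $\mathbb{\hat{E}}[|Y_s^{\alpha}|^{2}]$ is secured, the H\"older-type estimate in the second stage is a direct application of Lemma \ref{guji} on the subinterval $[r,t]$ and requires no further ideas. The continuous differentiability of $x(\cdot)$ plays no essential role in the argument beyond ensuring that the initial value $x'(\alpha)$ is well defined.
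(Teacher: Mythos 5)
Your proposal is correct, but it takes a genuinely different route from the paper. The paper never touches the SDE for $Y^{\alpha}$ directly: it works with the difference quotients $Z_t^{h}=(X_t^{\alpha+h}-X_t^{\alpha})/h$, proves $\mathbb{\hat{E}}[|Z_t^{h}|^{2}]\leq C$ and $\mathbb{\hat{E}}[|Z_t^{h}-Z_r^{h}|^{2}]\leq K(t-r)$ \emph{uniformly in} $h$ using only the Lipschitz property of $b,\sigma,h$ in $(\alpha,x)$ together with Proposition \ref{pr4}, and then transfers the modulus of continuity to $Y^{\alpha}$ through the three-term inequality $\mathbb{\hat{E}}[|Y_t-Y_s|^{2}]\leq 3\mathbb{\hat{E}}[|Y_t-Z_t^{h}|^{2}]+3\mathbb{\hat{E}}[|Z_t^{h}-Z_s^{h}|^{2}]+3\mathbb{\hat{E}}[|Z_s^{h}-Y_s|^{2}]$ and the uniform convergence (\ref{eq117}) from Theorem \ref{th4}. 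The advantage of the paper's route is that the inhomogeneous terms $b_{\alpha},\sigma_{\alpha},h_{\alpha}$ never appear explicitly -- they are absorbed into the global Lipschitz bound on $b(\alpha+h,s,\widetilde{X}_s)-b(\alpha,s,X_s)$ -- so no polynomial-growth estimate is needed. Your route works with the limiting equation itself: it needs the a priori bound $\sup_{t}\mathbb{\hat{E}}[|Y_t^{\alpha}|^{2}]<\infty$ (obtained by Gronwall from the boundedness of $b_x,\sigma_x,h_x$ and by controlling $b_{\alpha}(\alpha,s,X_s^{\alpha})$ etc.\ through their $C_{l,lip}$ growth and Proposition \ref{pr3}), after which the subinterval estimates of Lemma \ref{guji} give the same bound $K(t-r)$. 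This is arguably cleaner in that it avoids the uniformity-in-$h$ bookkeeping and makes no further use of Theorem \ref{th4} beyond the validity of the SDE for $Y^{\alpha}$; the small price is that you must justify the finiteness of $\mathbb{\hat{E}}[|Y_t^{\alpha}|^{2}]$ before applying Gronwall, which follows from $Y^{\alpha}\in M_{G}^{2}(0,T)$ as provided by Lemma \ref{weiyi} applied to the linear equation. Your closing remark is also accurate: continuous differentiability of $x(\cdot)$ is not actually used, since the initial value cancels in $Y_t-Y_r$.
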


\begin{proof}
We use the same notations as those in the proof of Theorem
\ref{th4}. For all $t\in[0,T]$, by the condition of $b,\sigma, h$
and Proposition \ref{pr4},  we have
 \begin{eqnarray*}\label{}
\mathbb{\hat{E}}[|Z_t^{h}|^{2}]&\leq&4+\frac{4}{h^{2}}\mathbb{\hat{E}}[|\int_0^t
[b(\alpha+h, s,\widetilde{X}_s)-b(\alpha, s, X_s)]ds|^{2}]\\
&&+\frac{4}{h^{2}}\mathbb{\hat{E}}[|\int_0^t [\sigma(\alpha+h, s, \widetilde{X}_s)-\sigma(\alpha, s, X_s)]dB_{s}|^{2}]\\
&&+\frac{4}{h^{2}}\mathbb{\hat{E}}[|\int_0^t [h(\alpha+h, s,
\widetilde{X}_s)-h(\alpha, s, X_s)]d\langle B \rangle_{s}|^{2}]\\
&\leq&4+\frac{4t}{h^{2}}\int_0^t \mathbb{\hat{E}}[b(\alpha+h,
s,\widetilde{X}_s)-b(\alpha, s,X_s)]^{2}]ds\\
&&+\frac{4}{h^{2}}\int_0^t \mathbb{\hat{E}}[|\sigma(\alpha+h, s, \widetilde{X}_s)-\sigma(\alpha, s, X_s)|^{2}]ds\\
&&+\frac{4t}{h^{2}}\int_0^t \mathbb{\hat{E}}[|h(\alpha+h, s,
\widetilde{X}_s)-h(\alpha, s, X_s)|^{2}]ds\\
&\leq&4+\frac{C}{h^{2}}\int_0^t
\mathbb{\hat{E}}[|\widetilde{X}_s-X_s|^{2}]ds\\
&\leq&C,
\end{eqnarray*}
where $C$ is a constant depending on $T$ and Lipschitz constant of
$b,\sigma, h$. Therefore, from (\ref{eq117}) we have
 \begin{eqnarray*}\label{}
\mathbb{\hat{E}}[|Y_t|^{2}]\leq
2\mathbb{\hat{E}}[|Y_t-Z_t^{h}|^{2}]+2\mathbb{\hat{E}}[|Z_t^{h}|^{2}]\leq
C<\infty.
\end{eqnarray*}
Without loss of generality, we suppose that $0\leq r \leq t \leq T$.
By (\ref{e3}) we have
 \begin{eqnarray*}\label{}
\mathbb{\hat{E}}[|Z_t^{h}-Z_r^{h}|^{2}]&\leq&\frac{3}{h^{2}}\mathbb{\hat{E}}[|\int_r^t
[b(\alpha+h, s,\widetilde{X}_s)-b(\alpha, s, X_s)]ds|^{2}]\\
&&+\frac{3}{h^{2}}\mathbb{\hat{E}}[|\int_r^t [\sigma(\alpha+h, s, \widetilde{X}_s)-\sigma(\alpha, s, X_s)]dB_{s}|^{2}]\\
&&+\frac{3}{h^{2}}\mathbb{\hat{E}}[|\int_r^t [h(\alpha+h, s,
\widetilde{X}_s)-h(\alpha, s, X_s)]d\langle B \rangle_{s}|^{2}].
\end{eqnarray*}
Then by virtue of Lipschitz condition of $b$ and Proposition
\ref{pr4}, we obtain
 \begin{eqnarray*}\label{}
&&\frac{1}{h^{2}}\mathbb{\hat{E}}[|\int_r^t
[b(\alpha+h, s,\widetilde{X}_s)-b(\alpha, s, X_s)]ds|^{2}]\\
&\leq&\frac{t-r}{h^{2}}\int_r^t
\mathbb{\hat{E}}[|b(\alpha+h, s,\widetilde{X}_s)-b(\alpha, s, X_s)|^{2}]ds \\
&\leq&\frac{C(t-r)}{h^{2}}\int_r^t
\mathbb{\hat{E}}[|\widetilde{X}_s-X_s|^{2}+h^{2}]ds \\
&\leq&C(t-r)^{2},
\end{eqnarray*}
where $C$ is a constant which is independent of $t,r$. By a similar
argument we obtain
\begin{eqnarray*}\label{}
\frac{1}{h^{2}}\mathbb{\hat{E}}[|\int_r^t [\sigma(\alpha+h, s,
\widetilde{X}_s)-\sigma(\alpha, s, X_s)]dB_{s}|^{2}] \leq C(t-r),
\end{eqnarray*}
and
\begin{eqnarray*}\label{}
\frac{1}{h^{2}}\mathbb{\hat{E}}[|\int_r^t [h(\alpha+h, s,
\widetilde{X}_s)-h(\alpha, s, X_s)]d\langle B \rangle_{s}|^{2}] \leq
C(t-r)^{2}.
\end{eqnarray*}
Then the above inequalities yield
 \begin{eqnarray}\label{eq119}
\mathbb{\hat{E}}[|Z_t^{h}-Z_r^{h}|^{2}]\leq K(t-r).
\end{eqnarray}
Therefore, by (\ref{eq117}) and (\ref{eq119}) we have
 \begin{eqnarray*}\label{}
\mathbb{\hat{E}}[|Y_t-Y_s|^{2}]&\leq &
3\mathbb{\hat{E}}[|Y_t-Z_t^{h}|^{2}]+3\mathbb{\hat{E}}[|Z_t^{h}-Z_s^{h}|^{2}]+3\mathbb{\hat{E}}[|Z_s^{h}-Y_s|^{2}]\\
&\rightarrow& 0,
\end{eqnarray*}
as $t\rightarrow s$.
 The proof is complete.
\end{proof}

\begin{proposition}\label{pr5}
 Under the conditions of Theorem \ref{th4}, if $x(\cdot)$ is
 continuously differentiable, then
\begin{eqnarray*}\label{}
\lim\limits_{h\rightarrow0}\mathbb{\hat{E}}[\sup\limits_{t\in[0,T]}|Y_t^{\alpha}-Y_t^{\alpha+h}|^{2}]=0.
\end{eqnarray*}
\end{proposition}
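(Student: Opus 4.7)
The plan is to derive an SDE for $\Delta_t^h := Y_t^{\alpha+h} - Y_t^\alpha$ and close it via Gronwall's inequality in $L_G^2$. Subtracting the SDE of Theorem \ref{th4} at $\alpha$ from the one at $\alpha+h$, I obtain
$$\Delta_t^h = [x'(\alpha+h) - x'(\alpha)] + J_1 + J_2 + J_3 + K_1 + K_2 + K_3,$$
where $J_1, J_2, J_3$ collect the three differences of the form $b_x(\alpha+h,s,X_s^{\alpha+h})Y_s^{\alpha+h} - b_x(\alpha,s,X_s^\alpha)Y_s^\alpha$ under $ds$, $dB_s$, $d\langle B\rangle_s$ respectively, and $K_1, K_2, K_3$ collect the inhomogeneous differences of the form $b_\alpha(\alpha+h,s,X_s^{\alpha+h}) - b_\alpha(\alpha,s,X_s^\alpha)$ under the same integrators.

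For each $J_i$ I split the integrand, for example in the drift case,
$$b_x(\alpha+h,s,X_s^{\alpha+h})Y_s^{\alpha+h} - b_x(\alpha,s,X_s^\alpha)Y_s^\alpha = b_x(\alpha+h,s,X_s^{\alpha+h})\,\Delta_s^h + R_s,$$
with $R_s := [b_x(\alpha+h,s,X_s^{\alpha+h}) - b_x(\alpha,s,X_s^\alpha)]\,Y_s^\alpha$. Boundedness of $b_x$ makes the first piece contribute a term that Gronwall will absorb. For $R_s$, since $b_x \in C_{l,lip}(\mathbb{R}^2)$, the increment of $b_x$ is bounded by $C(1+|X_s^\alpha|^k+|X_s^{\alpha+h}|^k)(|h| + |X_s^{\alpha+h} - X_s^\alpha|)$; an $\varepsilon$, $1/\varepsilon$-splitting exactly as in the proofs of Theorems \ref{th1} and \ref{th4}, combined with Proposition \ref{pr4} (which gives $\mathbb{\hat{E}}[|X_s^{\alpha+h} - X_s^\alpha|^p]\leq C|h|^p$) and a uniform $L_G^p$-bound on $Y^\alpha$, makes the remainder negligible in $L_G^2$. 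The $dB$ and $d\langle B\rangle$ analogues for $J_2, J_3$ are handled identically via Lemma \ref{guji}.

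For each $K_i$, local Lipschitz continuity of $b_\alpha, \sigma_\alpha, h_\alpha$ in $(\alpha,x)$, together with Propositions \ref{pr3}, \ref{pr4} and Lemma \ref{guji}, yields $\mathbb{\hat{E}}[\sup_{t\in[0,T]}|K_i|^2] \to 0$ as $h \to 0$, and $|x'(\alpha+h) - x'(\alpha)|^2 \to 0$ follows from continuous differentiability of $x(\cdot)$. Collecting all estimates gives
$$\mathbb{\hat{E}}\Big[\sup_{t\in[0,T]}|\Delta_t^h|^2\Big] \leq C\int_0^T \mathbb{\hat{E}}\Big[\sup_{r\in[0,s]}|\Delta_r^h|^2\Big]\,ds + \eta(h)$$
with $\eta(h) \to 0$, and then Gronwall's inequality concludes the proof.

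The main obstacle I anticipate is establishing a uniform (in $t \in [0,T]$ and in small $|h|$) $L_G^p$-moment bound on $Y_t^\alpha$ and $Y_t^{\alpha+h}$ for $p$ large enough to accommodate the polynomial growth exponents of the $C_{l,lip}$ derivatives; this is exactly what is needed to make the local-Lipschitz remainders tend to zero after the $\varepsilon$-splitting. Such a bound should follow from applying Lemma \ref{guji} and Gronwall to the linear inhomogeneous SDE defining $Y^\alpha$ from Theorem \ref{th4}, using boundedness of $b_x, \sigma_x, h_x$ together with Proposition \ref{pr3} to control the inhomogeneous forcings $b_\alpha(\alpha, s, X_s^\alpha)$, $\sigma_\alpha(\alpha, s, X_s^\alpha)$, $h_\alpha(\alpha, s, X_s^\alpha)$.
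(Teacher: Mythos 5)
Your proposal follows essentially the same route as the paper: the same decomposition of $Y^{\alpha+h}_t-Y^{\alpha}_t$ into a Gronwall-absorbable term with coefficient $b_{x}(\alpha+h,s,X_s^{\alpha+h})$ plus a remainder controlled by the $C_{l,lip}$ growth of the derivatives, the $\varepsilon$--$1/\varepsilon$ splitting, Propositions \ref{pr3} and \ref{pr4}, and Gronwall's inequality. The only (minor) added value is that you flag explicitly the need for a uniform $L_G^p$ moment bound on $Y^{\alpha}$ and sketch how to obtain it from the linear SDE of Theorem \ref{th4}, a point the paper uses implicitly without comment.
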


\begin{proof}
Let $h\neq 0$ be small. For simplicity, we put
\begin{eqnarray*}
X_t:=X_t^{\alpha},\  \widetilde{X}_t:=X_t^{\alpha+h},\
Y_t:=Y_t^{\alpha}, \ \widetilde{Y}_t:=Y_t^{\alpha+h}.
\end{eqnarray*}
Then we have
 \begin{eqnarray*}\label{}
\widetilde{Y}_t-Y_t&=&x'(\alpha+h)-x'(\alpha)+I_{1}+I_{2}+I_{3},
\end{eqnarray*}
where
 \begin{eqnarray*}\label{}
I_{1}&=&\int_0^t [b_{x}(\alpha+h,
s,\widetilde{X}_s)\widetilde{Y}_{s}-b_{x}(\alpha, s,
X_s)Y_s]ds+\int_0^t [b_{\alpha}(\alpha+h, s,
\widetilde{X}_s)-b_{\alpha}(\alpha, s, X_s)]ds,\\
I_{2}&=&\int_0^t [\sigma_{x}(\alpha+h, s,
\widetilde{X}_s)\widetilde{Y}_{s}-\sigma_{x}(\alpha, s,
X_s)Y_s]dB_{s}+\int_0^t [\sigma_{\alpha}(\alpha+h, s,
\widetilde{X}_s)-\sigma_{\alpha}(\alpha, s, X_s)]dB_{s},\\
I_{3}&=&\int_0^t [h_{x}(\alpha+h, s,
\widetilde{X}_s)\widetilde{Y}_{s}-h_{x}(\alpha, s, X_s)Y_s]d\langle
B \rangle_{s}+\int_0^t [h_{\alpha}(\alpha+h, s,
\widetilde{X}_s)-h_{\alpha}\alpha, s, X_s)]d\langle B \rangle_{s}.
\end{eqnarray*}
For all $\varepsilon>0,$ from
$2ab\leq\frac{1}{\varepsilon}a^{2}+\varepsilon b^{2}, a, b\geq 0,$
it follows that for some $k\in\mathbb{N}$,
\begin{eqnarray*}\label{}
&&\mathbb{\hat{E}}[\sup\limits_{t\in [0,T]}|\int_0^t
[b_{x}(\alpha+h, s,\widetilde{X}_s)\widetilde{Y}_{s}-b_{x}(\alpha,
s, X_s)Y_s]ds|^{2}]\\
&\leq &2\mathbb{\hat{E}}[|\int_0^T [b_{x}(\alpha+h,
s,\widetilde{X}_s)(\widetilde{Y}_{s}-Y_s)]ds|^{2}]\\&&+2\mathbb{\hat{E}}[|\int_0^T
[b_{x}(\alpha+h, s,\widetilde{X}_s)-b_{x}(\alpha,
s, X_s)]Y_sds|^{2}]\\
&\leq &C\mathbb{\hat{E}}\Big[\int_0^T |\widetilde{Y}_{s}-Y_s|^{2}
ds\Big]+C(h^{2}+h^{2k+2})\int_0^T\mathbb{\hat{E}}[|Y_s|^{2}]ds+C\varepsilon\int_0^T
\mathbb{\hat{E}}[|Y_s|^{4}]ds\\&&+\frac{C}{\varepsilon}\int_0^T
\mathbb{\hat{E}}[|\widetilde{X}_s-X_s|^{4}+|\widetilde{X}_s-X_s|^{4k+4}+|X_s|^{4k}|\widetilde{X}_s-X_s|^{4}]ds.
\end{eqnarray*}
Then by virtue of Proposition \ref{pr3} and Proposition \ref{pr4},
we obtain
\begin{eqnarray*}\label{}
&&\mathbb{\hat{E}}[\sup\limits_{t\in [0,T]}|\int_0^t
[b_{x}(\alpha+h, s,\widetilde{X}_s)-b_{x}(\alpha, s, X_s)]ds|^{2}]\\
&\leq &C\mathbb{\hat{E}}[\int_0^T |\widetilde{Y}_{s}-Y_s|^{2}
ds]+C(h^{2}+h^{2k+2})+C\varepsilon+\frac{C}{\varepsilon}
(h^{4}+h^{4k+4}).
\end{eqnarray*}
On the other hand, for some $l\in\mathbb{N}$,
\begin{eqnarray*}\label{}
&&\mathbb{\hat{E}}[\sup\limits_{t\in [0,T]}|\int_0^t
[b_{\alpha}(\alpha+h, s,\widetilde{X}_s)-b_{\alpha}(\alpha, s, X_s)]ds|^{2}]\\
&\leq &C(h^{2}+h^{2l+2})+C\int_0^T
\mathbb{\hat{E}}[|\widetilde{X}_s-X_s|^{2}+|\widetilde{X}_s-X_s|^{2l+2}+|X_s|^{2l}|\widetilde{X}_s-X_s|^{2}]ds\\
&\leq &C(h^{2}+h^{2l+2}).
\end{eqnarray*}
Therefore,
\begin{eqnarray*}\label{}
\mathbb{\hat{E}}[\sup\limits_{t\in [0,T]}|I_{1}|^{2}]\leq
C\mathbb{\hat{E}}[\int_0^T |\widetilde{Y}_{s}-Y_s|^{2}
ds]+C(h^{2}+h^{2k+2}+h^{2l+2})+C\varepsilon+\frac{C}{\varepsilon}
(h^{4}+h^{4k+4}).
\end{eqnarray*}
Similarly, we have for some $m,n\in\mathbb{N}$,
\begin{eqnarray*}\label{}
\mathbb{\hat{E}}[\sup\limits_{t\in [0,T]}|I_{2}|^{2}]\leq
C\mathbb{\hat{E}}[\int_0^T |\widetilde{Y}_{s}-Y_s|^{2}
ds]+C(h^{2}+h^{2m+2}+h^{2n+2})+C\varepsilon+\frac{C}{\varepsilon}
(h^{4}+h^{4m+4}),
\end{eqnarray*}
and for some $p,q\in\mathbb{N}$,
\begin{eqnarray*}\label{}
\mathbb{\hat{E}}[\sup\limits_{t\in [0,T]}|I_{2}|^{2}]\leq
C\mathbb{\hat{E}}[\int_0^T |\widetilde{Y}_{s}-Y_s|^{2}
ds]+C(h^{2}+h^{2p+2}+h^{2q+2})+C\varepsilon+\frac{C}{\varepsilon}
(h^{4}+h^{4p+4}).
\end{eqnarray*}
Thus,
\begin{eqnarray*}\label{}
\mathbb{\hat{E}}[\sup\limits_{t\in
[0,T]}|\widetilde{Y}_t-Y_t|^{2}]&\leq& C\mathbb{\hat{E}}[\int_0^T
\sup\limits_{r\in [0,s]}|\widetilde{Y}_{r}-Y_r|^{2}
ds]+\frac{C}{\varepsilon}
(h^{4}+h^{4k+4}+h^{4m+4}+h^{4p+4})\\
&&+C(h^{2}+h^{2k+2}+h^{2l+2}+h^{2m+2}+h^{2n+2}+h^{2p+2}+h^{2q+2})\\
&&+C\varepsilon+|x'(\alpha+h)-x'(\alpha)|^{2}.
\end{eqnarray*}
Due to  Gronwall's inequality we get
\begin{eqnarray*}\label{}
\mathbb{\hat{E}}[\sup\limits_{t\in
[0,T]}|\widetilde{Y}_t-Y_t|^{2}]&\leq& \frac{C}{\varepsilon}
(h^{4}+h^{4k+4}+h^{4m+4}+h^{4p+4})+C\varepsilon+|x'(\alpha+h)-x'(\alpha)|^{2}\\
&&+C(h^{2}+h^{2k+2}+h^{2l+2}+h^{2m+2}+h^{2n+2}+h^{2p+2}+h^{2q+2}).
\end{eqnarray*}
Letting $h\rightarrow0$, we get
\begin{eqnarray*}\label{}
\lim\limits_{h\rightarrow0}\mathbb{\hat{E}}[\sup\limits_{t\in
[0,T]}|\widetilde{Y}_t-Y_t|^{2}]\leq C\varepsilon.
\end{eqnarray*}
Consequently,
\begin{eqnarray*}\label{}
\lim\limits_{h\rightarrow0}\mathbb{\hat{E}}[\sup\limits_{t\in
[0,T]}|\widetilde{Y}_t-Y_t|^{2}]=0.
\end{eqnarray*}
 The proof is complete.
\end{proof}

\begin{theorem}\label{th2}
 Under the condition of Theorem \ref{th4}, $x(\cdot)$ is twice differentiable and for all $t\in[0,T]$,
 $b_{xx}(\cdot,t,\cdot),  b_{x\alpha}(\cdot,t,\cdot), \sigma_{xx}(\cdot,t,\cdot),
 \sigma_{x\alpha}(\cdot,t,\cdot), h_{xx}(\cdot,t,\cdot), h_{x\alpha}(\cdot,t,\cdot)\in
C_{l,lip}(\mathbb{R}^{2})$ and are bounded, then $\frac{\partial
X^{\alpha}_{t}}{\partial \alpha}$ is continuously differentiable in
$L_{G}^{2}$ with respect to $\alpha$. Moreover,
$P_{t}^{\alpha}:=\frac{\partial^{2} X^{\alpha}_{t}}{\partial
\alpha^{2}}$ satisfies  the following stochastic differential
equation
\begin{eqnarray}\label{e4}
P_t^{\alpha}&=&x''(\alpha)+\int_0^t b_{xx}(\alpha, s,
X_s^{\alpha})(Y_s^{\alpha})^{2}ds+\int_0^t \sigma_{xx}(\alpha, s,
X_s^{\alpha})(Y_s^{\alpha})^{2}dB_{s}\nonumber\\
&&+\int_0^t h_{xx}(\alpha, s,
X_s^{\alpha})(Y_s^{\alpha})^{2}d\langle B\rangle_{s}+\int_0^t
b_{x}(\alpha, s, X_s^{\alpha})P_s^{\alpha}ds+\int_0^t
\sigma_{x}(\alpha, s, X_s^{\alpha})P_s^{\alpha}dB_{s}\nonumber
\\&&+\int_0^t
h_{x}(\alpha, s, X_s^{\alpha})P_s^{\alpha}d\langle
B\rangle_{s}+2\int_0^t b_{x\alpha}(\alpha, s,
X_s^{\alpha})Y_s^{\alpha}ds \nonumber\\
&&+2\int_0^t \sigma_{x\alpha}(\alpha, s,
X_s^{\alpha})Y_s^{\alpha}dB_{s}+2\int_0^t h_{x\alpha}(\alpha, s,
X_s^{\alpha})Y_s^{\alpha}d\langle
B\rangle_{s}\nonumber\\
&&+\int_0^t b_{\alpha\alpha}(\alpha, s, X_s^{\alpha})ds+\int_0^t
\sigma_{\alpha\alpha}(\alpha, s, X_s^{\alpha})dB_{s}+\int_0^t
h_{\alpha\alpha}(\alpha, s, X_s^{\alpha})d\langle B\rangle_{s},
\end{eqnarray}
where $ Y_t^{\alpha}$ is defined in Theorem \ref{th4}.
\end{theorem}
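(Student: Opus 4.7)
The plan is to adapt the scheme used for the second-order differentiability with respect to the initial data and combine it with the $\alpha$-derivative techniques of Theorem~\ref{th4}. Following the notation there, write $X_t, \widetilde{X}_t, Y_t, \widetilde{Y}_t$ for $X_t^{\alpha}, X_t^{\alpha+h}, Y_t^{\alpha}, Y_t^{\alpha+h}$, reuse $Z_t^h = (\widetilde{X}_t - X_t)/h$, and set
\[
Q_t^h := \frac{\widetilde{Y}_t - Y_t}{h}.
\]
The goal is to show $\mathbb{\hat{E}}[\sup_{t\in[0,T]}|Q_t^h - P_t^{\alpha}|^2]\to 0$ as $h\to 0$, where $P_t^{\alpha}$ denotes the unique solution in $M_G^2(0,T)$ of the linear SDE~(\ref{e4}). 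Existence and uniqueness of $P^{\alpha}$ follow from Lemma~\ref{weiyi}: the coefficients of $P$ are bounded and Lipschitz by hypothesis, while the inhomogeneous drivers $b_{xx}(\alpha,s,X_s)Y_s^2$, $b_{x\alpha}(\alpha,s,X_s)Y_s$, $b_{\alpha\alpha}(\alpha,s,X_s)$, and their $\sigma,h$ analogues, lie in $M_G^2(0,T)$ by Propositions~\ref{pr3}, \ref{pr4} and Theorem~\ref{th4}.

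First I would subtract the SDEs satisfied by $\widetilde{Y}_t$ and $Y_t$ (both provided by Theorem~\ref{th4}), divide by $h$, and Taylor-expand each difference quotient with integral remainder. For the $b$-drift one writes
\[
\frac{b_x(\alpha+h, s, \widetilde{X}_s)\widetilde{Y}_s - b_x(\alpha, s, X_s) Y_s}{h}
= b_x(\alpha+h, s, \widetilde{X}_s)\,Q_s^h + B_s^h\, Y_s + A_s^h\, Z_s^h Y_s,
\]
with $A_s^h=\int_0^1 b_{xx}(\alpha, s, X_s+\theta(\widetilde{X}_s-X_s))d\theta$ and $B_s^h=\int_0^1 b_{x\alpha}(\alpha+\theta h, s, \widetilde{X}_s)d\theta$, while the $b_\alpha$ difference quotient splits symmetrically as $\int_0^1 b_{\alpha\alpha}(\alpha+\theta h, s, \widetilde{X}_s)d\theta + Z_s^h\int_0^1 b_{x\alpha}(\alpha, s, X_s+\theta(\widetilde{X}_s-X_s))d\theta$, with analogous expansions for the $\sigma$- and $h$-coefficients. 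Grouping the pieces reproduces exactly the right-hand side of~(\ref{e4}); in particular, the coefficient $2$ in front of the mixed-derivative terms comes from adding the $B_s^h Y_s$ contribution above to the $x$-derivative remainder that appears in the expansion of $b_\alpha$.

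Next, for each resulting piece I would carry out an $L_G^2$-estimate of the same type as in Theorem~\ref{th4}: apply $2ab\le a^2/\varepsilon+\varepsilon b^2$ to separate the ``cross'' contributions, use Lemma~\ref{guji} to dominate the Lebesgue, Brownian and quadratic-variation integrals by time integrals of $\mathbb{\hat{E}}[|\cdot|^2]$, and use the polynomial-Lipschitz structure of the second derivatives to extract differences of the form $|\widetilde{X}_s-X_s|^{k+1}$ or $|X_s|^k|\widetilde{X}_s-X_s|$, which are then controlled by Propositions~\ref{pr3} and~\ref{pr4}. The outcome is an inequality
\[
\mathbb{\hat{E}}\Big[\sup_{t\in[0,T]}|Q_t^h - P_t^{\alpha}|^2\Big]
\le C\int_0^T \mathbb{\hat{E}}\Big[\sup_{r\in[0,s]}|Q_r^h - P_r^{\alpha}|^2\Big]ds
+ C\varepsilon + R(h,\varepsilon),
\]
where $R(h,\varepsilon)$ collects three kinds of vanishing contributions: polynomial powers $h^{2}, h^{2k+2}, h^{4k+4},\ldots$ coming from the continuity of the second derivatives; the quantity $\varepsilon^{-1}\mathbb{\hat{E}}[\sup_t|Z_t^h - Y_t|^4]$, which vanishes by Remark~\ref{re2}; and $\mathbb{\hat{E}}[\sup_t|\widetilde{Y}_t - Y_t|^2]$, which vanishes by Proposition~\ref{pr5}. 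Gronwall's inequality then converts this into $\mathbb{\hat{E}}[\sup_t|Q_t^h - P_t^{\alpha}|^2] \le C\varepsilon + R(h,\varepsilon)$, and letting first $h\to 0$ and then $\varepsilon\to 0$ closes the argument.

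The main obstacle is not analytical but combinatorial: differentiating the product $b_x(\alpha, s, X_s^{\alpha})Y_s^{\alpha}$ in $\alpha$ produces the three pieces $b_{xx}(Y^{\alpha})^2$, $b_{x\alpha} Y^{\alpha}$ and $b_x P^{\alpha}$, while the already-once-differentiated term $b_\alpha(\alpha, s, X_s^{\alpha})$ contributes an additional $b_{x\alpha} Y^{\alpha}$ together with $b_{\alpha\alpha}$, and the same proliferation occurs for the $\sigma$- and $h$-coefficients (with their $dB_s$ and $d\langle B\rangle_s$ integrations). The delicate point is to track every one of these remainders, verify that they reassemble precisely into (\ref{e4}) with the correct factor $2$ in front of the mixed-derivative terms, and confirm that every residual can be absorbed by the three vanishing mechanisms above before the final limit $h\to 0$, $\varepsilon\to 0$.
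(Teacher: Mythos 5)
Your proposal is correct and follows essentially the same route as the paper: the same difference quotient $Q_t^h=(\widetilde{Y}_t-Y_t)/h$, the same telescoping/Taylor decomposition with integral remainders (the paper merely orders the three intermediate terms slightly differently), the same identification of the factor $2$ from the two $b_{x\alpha}$ contributions, and the same closing argument via Lemma \ref{guji}, Propositions \ref{pr3}--\ref{pr4}, Remark \ref{re2}, Proposition \ref{pr5} and Gronwall's inequality.
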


\begin{proof}
Let $h\neq 0$ be small. We use the same notations as Theorem
\ref{th4}. For simplicity, we also put
\begin{eqnarray*}
P_t:=P^{\alpha}_t, \ Q^{h}_t:=\frac{\widetilde{Y}_t-Y_t}{h}.
\end{eqnarray*}
Then we have
 \begin{eqnarray*}\label{}
Q_t^{h}&=&\frac{x(\alpha+h)-x(\alpha)}{h}+I_{1}+I_{2}+I_{3},
\end{eqnarray*}
where
 \begin{eqnarray*}\label{}
I_{1}&=&\frac{1}{h}\int_0^t [b_{x}(\alpha+h, s,
\widetilde{X}_s)\widetilde{Y}_{s}-b_{x}(\alpha, s,
X_s)Y_s]ds\\&&+\frac{1}{h}\int_0^t [b_{\alpha}(\alpha+h, s,
\widetilde{X}_s)-b_{\alpha}(\alpha, s, X_s)]ds,\\
I_{2}&=&\frac{1}{h}\int_0^t [\sigma_{x}(\alpha+h, s,
\widetilde{X}_s)\widetilde{Y}_{s}-\sigma_{x}(\alpha, s,
X_s)Y_s]dB_{s}\\
&&+\frac{1}{h}\int_0^t [\sigma_{\alpha}(\alpha+h, s,
\widetilde{X}_s)-\sigma_{\alpha}(\alpha, s, X_s)]dB_{s},\\
I_{3}&=&\frac{1}{h}\int_0^t [h_{x}(\alpha+h, s,
\widetilde{X}_s)\widetilde{Y}_{s}-h_{x}(s, X_s)Y_s]d\langle B
\rangle_{s}\\
&&+\frac{1}{h}\int_0^t [h_{\alpha}(\alpha+h, s,
\widetilde{X}_s)-h_{\alpha}(\alpha, s, X_s)]d\langle B \rangle_{s}.
\end{eqnarray*}
We divide the proof into several steps.\vskip1mm

\noindent  Step 1:
 Since  $b_{x\alpha}(\cdot,t,\cdot)\in
C_{l,lip}(\mathbb{R}^{2})$, for all $t\in[0,T]$, we have
 \begin{eqnarray*}\label{}
\frac{1}{h}\int_0^t [b_{x}(\alpha+h, s,
\widetilde{X}_s)\widetilde{Y}_{s}-b_{x}(\alpha, s,
\widetilde{X}_s)\widetilde{Y}_s]ds&=&\int_0^t \int_0^1
b_{x\alpha}(\alpha+\theta h, s, \widetilde{X}_s)d\theta
\widetilde{Y}_s ds.
\end{eqnarray*}
Then for some $k\in\mathbb{N}$,
 \begin{eqnarray*}\label{}
&&\mathbb{\hat{E}}[\sup\limits_{t\in [0,T]}|\int_0^t \int_0^1
b_{x\alpha}(\alpha+\theta h, s, \widetilde{X}_s)d\theta
\widetilde{Y}_s ds-\int_0^t b_{x\alpha}(\alpha, s,
X_s)Y_sds|^{2}]\\
&\leq&2\mathbb{\hat{E}}[(\int_0^T \int_0^1
|b_{x\alpha}(\alpha+\theta h, s, \widetilde{X}_s)|d\theta
|\widetilde{Y}_s -Y_s|ds)^{2}]\\&&+2\mathbb{\hat{E}}[(\int_0^T
\int_0^1 |b_{x\alpha}(\alpha+\theta h, s, \widetilde{X}_s)-
b_{x\alpha}(\alpha, s, X_s)|d\theta
|Y_s|ds)^{2}]\\
&\leq &C\mathbb{\hat{E}}[\int_0^T|\widetilde{Y}_s
-Y_s|^{2}ds]+C(h^{2}+h^{2k+2})\int_0^T
\mathbb{\hat{E}}[|Y_s|^{2}]ds\\&&+C\mathbb{\hat{E}}[(\int_0^T
(|\widetilde{X}_s-X_s|+|\widetilde{X}_s-X_s|^{k+1}+
|X_s|^{k}|\widetilde{X}_s-X_s|) |Y_s| ds)^{2}].
\end{eqnarray*}
For all $\varepsilon>0,$ from
$2ab\leq\frac{1}{\varepsilon}a^{2}+\varepsilon b^{2}, a, b\geq 0,$
Proposition \ref{pr3} and Proposition \ref{pr4}, it follows that
\begin{eqnarray*}\label{}
&&\mathbb{\hat{E}}[\sup\limits_{t\in [0,T]}|\int_0^t \int_0^1
b_{x\alpha}(\alpha+\theta h, s, \widetilde{X}_s)d\theta
\widetilde{Y}_s ds-\int_0^t b_{x\alpha}(\alpha, s,
X_s)Y_sds|^{2}]\\
&\leq &C\mathbb{\hat{E}}[\sup\limits_{s\in[0,T]}|\widetilde{Y}_s
-Y_s|^{2}]+C(h^{2}+h^{2k+2})+C\varepsilon+\frac{C}{\varepsilon}
(h^{4}+h^{4k+4}).
\end{eqnarray*}
Step 2: Since
 \begin{eqnarray*}\label{}
\frac{1}{h}\int_0^t [b_{x}(\alpha, s,
\widetilde{X}_s)\widetilde{Y}_{s}-b_{x}(\alpha, s,
\widetilde{X}_s)Y_s]ds=\int_0^t b_{x}(\alpha, s,
\widetilde{X}_s)Q_s^{h} ds,
\end{eqnarray*}
then we have for some $l\in\mathbb{N}$,
\begin{eqnarray*}\label{}
&&\mathbb{\hat{E}}[\sup\limits_{t\in [0,T]}|\int_0^t b_{x}(\alpha,s,
\widetilde{X}_s) Q_s^{h}ds-\int_0^t b_{x}(\alpha, s,
X_s)P_sds|^{2}]\\
&\leq&C\mathbb{\hat{E}}[\int_0^T |b_{x}(\alpha,s,
\widetilde{X}_s)|^{2}
|Q_s^{h}-P_s|^{2}ds]\\&&+C\mathbb{\hat{E}}[\int_0^T |b_{x}(\alpha,s,
\widetilde{X}_s) - b_{x}(\alpha, s, X_s)|^{2}|P_s|^{2}ds]\\
&\leq &C\mathbb{\hat{E}}[\int_0^T |Q_s^{h}-P_s|^{2}
ds]+C\varepsilon\int_0^T
\mathbb{\hat{E}}[|P_s|^{4}]ds\\&&+\frac{C}{\varepsilon}\int_0^T
\mathbb{\hat{E}}[|\widetilde{X}_s-X_s|^{4}+|\widetilde{X}_s-X_s|^{4l+4}+|X_s|^{4l}|\widetilde{X}_s-X_s|^{4}]ds.
\end{eqnarray*}
Thus, by virtue of Proposition \ref{pr3} and Proposition \ref{pr4},
we obtain
\begin{eqnarray*}\label{}
&&\mathbb{\hat{E}}[\sup\limits_{t\in [0,T]}|\int_0^t b_{x}(\alpha,s,
\widetilde{X}_s) Q_s^{h}ds-\int_0^t b_{x}(\alpha, s,
X_s)P_sds|^{2}]\nonumber\\
 &\leq &C\int_0^T \mathbb{\hat{E}}[\sup\limits_{r\in [0,s]}|Q_r^{h}-P_r|^{2}]
ds+C\varepsilon+\frac{C}{\varepsilon} (h^{4}+h^{4l+4}).
\end{eqnarray*}
Step 3: Since
 \begin{eqnarray*}\label{}
\frac{1}{h}\int_0^t [b_{x}(\alpha, s,
\widetilde{X}_s)Y_{s}-b_{x}(\alpha, s, X_s)Y_s]ds=\int_0^t\int_0^1
b_{xx}(\alpha, s, X_s+\theta(\widetilde{X}_s-X_s))d\theta Y_s
Z_s^{h} ds,
\end{eqnarray*}
then we have for some $m\in\mathbb{N}$,
 \begin{eqnarray*}\label{}
&&\mathbb{\hat{E}}[\sup\limits_{t\in [0,T]}|\int_0^t \int_0^1
b_{xx}(\alpha, s, X_s+\theta(\widetilde{X}_s-X_s))d\theta Z_s^{h}Y_s
ds-\int_0^t b_{xx}(\alpha, s,X_s)Y^{2}_sds|^{2}]\\
&\leq &2\mathbb{\hat{E}}[(\int_0^T \int_0^1 |b_{xx}(\alpha, s,
X_s+\theta(\widetilde{X}_s-X_s))|d\theta |Z_s^{h}Y_s-Y^{2}_s| ds
)^{2}]\\&&+2\mathbb{\hat{E}}[(\int_0^T \int_0^1 |b_{xx}(\alpha, s,
X_s+\theta(\widetilde{X}_s-X_s))-b_{xx}(\alpha, s,X_s^{x})|d\theta
Y^{2}_sds)^{2}]\\
 &\leq & C\mathbb{\hat{E}}[(\int_0^T
|Z_s^{h}Y_s-Y^{2}_s| ds )^{2}]\\&&+C\mathbb{\hat{E}}[(\int_0^T
(|\widetilde{X}_s-X_s|+|\widetilde{X}_s-X_s|^{m+1}+
|X_s|^{m}|\widetilde{X}_s-X_s|) Y^{2}_sds)^{2}].
\end{eqnarray*}
For all $\varepsilon>0,$ from
$2ab\leq\frac{1}{\varepsilon}a^{2}+\varepsilon b^{2}, a, b\geq 0,$
it follows that
 \begin{eqnarray*}\label{}
&&\mathbb{\hat{E}}[\sup\limits_{t\in [0,T]}|\int_0^t \int_0^1
b_{xx}(\alpha, s, X_s+\theta(\widetilde{X}_s-X_s))d\theta Z_s^{h}Y_s
ds -\int_0^t b_{xx}(\alpha, s,X_s)Y^{2}_sds|^{2}]\\&\leq
&\frac{C}{\varepsilon}\mathbb{\hat{E}}[\sup\limits_{t\in [0,T]}
|Z_s^{h}-Y_s| ^{4}]+C\varepsilon\\&&+\frac{C}{\varepsilon}\int_0^T
\mathbb{\hat{E}}[|\widetilde{X}_s-X_s|^{4}+|\widetilde{X}_s-X_s|^{4m+4}+|X_s|^{4m}|\widetilde{X}_s-X_s|^{4}]ds.
\end{eqnarray*}
Using again Proposition \ref{pr3} and Proposition \ref{pr4}, we
obtain
\begin{eqnarray*}\label{}
&&\mathbb{\hat{E}}[\sup\limits_{t\in [0,T]}|\int_0^t \int_0^1
b_{xx}(\alpha, s, X_s+\theta(\widetilde{X}_s-X_s))d\theta Z_s^{h}Y_s
ds -\int_0^t b_{xx}(\alpha, s,X_s)Y^{2}_sds|^{2}]
\\&\leq&\frac{C}{\varepsilon}\mathbb{\hat{E}}[\sup\limits_{t\in [0,T]}
|Z_s^{h}-Y_s| ^{4}]+
 C\varepsilon+\frac{C}{\varepsilon} (h^{4}+h^{4m+4}).
\end{eqnarray*}
Step 4:
 Since \begin{eqnarray*}\label{}
\frac{1}{h}\int_0^t [b_{\alpha}(\alpha+h, s,
\widetilde{X}_s)-b_{\alpha}(\alpha, s,
\widetilde{X}_s)]ds=\int_0^t\int_0^1 b_{\alpha\alpha}(\alpha+\theta
h, s, \widetilde{X}_s)d\theta ds,
\end{eqnarray*}
then we have for some $n\in\mathbb{N}$,
\begin{eqnarray*}\label{}
&&\mathbb{\hat{E}}[\sup\limits_{t\in [0,T]}|\int_0^t\int_0^1
b_{\alpha\alpha}(\alpha+\theta h, s, \widetilde{X}_s)d\theta ds-\int_0^t b_{\alpha\alpha}(\alpha,s,X_s)ds|^{2}]\\
&\leq& C \mathbb{\hat{E}}[\int_0^T\int_0^1
|b_{\alpha\alpha}(\alpha+\theta h, s, X_s)-b_{\alpha\alpha}(\alpha,s,X_s)|^{2}d\theta ds]\\
&\leq&C(h^{2}+h^{2n+2}).
\end{eqnarray*}
Step 5: Since
 \begin{eqnarray*}\label{}
\frac{1}{h}\int_0^t [b_{\alpha}(\alpha, s,
\widetilde{X}_s)-b_{\alpha}(\alpha, s, X_s)]ds=\int_0^t\int_0^1
b_{x\alpha}(\alpha, s, (X_s+\theta(\widetilde{X}_s-X_s)))d\theta
Z_{s}^{h} ds,
\end{eqnarray*}
then we have for some $k\in\mathbb{N}$,
\begin{eqnarray*}\label{}
&&\mathbb{\hat{E}}[\sup\limits_{t\in [0,T]}|\int_0^t\int_0^1
b_{x\alpha}(\alpha, s,
(X_s+\theta(\widetilde{X}_s-X_s)))d\theta Z_{s}^{h} ds-\int_0^t b_{x\alpha}(\alpha,s,X_s)Y_sds|^{2}]\nonumber\\
&\leq& 2\mathbb{\hat{E}}[(\int_0^T\int_0^1 |b_{x\alpha}(\alpha, s,
(X_s+\theta(\widetilde{X}_s-X_s)))|d\theta |Z_{s}^{h}-Y_s|ds)^{2}]\\
&&+\mathbb{\hat{E}}[(\int_0^T\int_0^1|b_{x\alpha}(\alpha, s,
(X_s+\theta(\widetilde{X}_s-X_s)))-b_{x\alpha}(\alpha,s,X_s)|d\theta Y_sds)^{2}]\\
&\leq&C(h^{2}+h^{2k+2}).
\end{eqnarray*}
From Step 1-Step 5 it follows that
\begin{eqnarray*}\label{}
&&\mathbb{\hat{E}}[\sup\limits_{t\in [0,T]}|I_{1}-2\int_0^t
b_{x\alpha}(\alpha, s, X_s)Y_sds-\int_0^t b_{x}(\alpha, s,
X_s)P_sds\\&&-\int_0^t b_{\alpha\alpha}(\alpha,s,X_s)ds-\int_0^t b_{xx}(\alpha, s,X_s)Y^{2}_sds|^{2}]\\
&\leq &C\int_0^T \mathbb{\hat{E}}[\sup\limits_{r\in
[0,s]}|Q_r^{h}-P_r|^{2}] ds+C\varepsilon+\frac{C}{\varepsilon}
(h^{4}+h^{4l+4}+h^{4m+4}+h^{4k+4})\\&&+\frac{C}{\varepsilon}\mathbb{\hat{E}}[\sup\limits_{t\in
[0,T]} |Z_s^{h}-Y_s|
^{4}]+C(h^{2}+h^{2n+2}+h^{2k+2})+C\mathbb{\hat{E}}[\sup\limits_{s\in[0,T]}|\widetilde{Y}_s
-Y_s|^{2}].
\end{eqnarray*}
Using similar arguments we obtain that for some $k_{1},l_{1}, m_{1},
n_{1}\in\mathbb{N}$,
\begin{eqnarray*}\label{}
&&\mathbb{\hat{E}}[\sup\limits_{t\in [0,T]}|I_{2}-2\int_0^t
\sigma_{x\alpha}(\alpha, s, X_s)Y_sdB_{s}-\int_0^t
\sigma_{x}(\alpha, s, X_s)P_sdB_{s}\\&&-\int_0^t
\sigma_{\alpha\alpha}(\alpha,s,X_s)dB_{s}
-\int_0^t \sigma_{xx}(\alpha, s,X_s)Y^{2}_sdB_{s}|^{2}]\nonumber\\
&\leq &C\int_0^T \mathbb{\hat{E}}[\sup\limits_{r\in
[0,s]}|Q_r^{h}-P_r|^{2}] ds+C\varepsilon+\frac{C}{\varepsilon}
(h^{4}+h^{4l_{1}+4}+h^{4m_{1}+4}+h^{4k_{1}+4})\\&&+\frac{C}{\varepsilon}\mathbb{\hat{E}}[\sup\limits_{t\in
[0,T]} |Z_s^{h}-Y_s|
^{4}]+C(h^{2}+h^{2n_{1}+2}+h^{2k_{1}+2})+C\mathbb{\hat{E}}[\sup\limits_{s\in[0,T]}|\widetilde{Y}_s
-Y_s|^{2}].
\end{eqnarray*}
and for some $k_{2},l_{2}, m_{2}, n_{2}\in\mathbb{N}$,
\begin{eqnarray*}\label{}
&&\mathbb{\hat{E}}[\sup\limits_{t\in [0,T]}|I_{3}-2\int_0^t
h_{x\alpha}(\alpha, s, X_s)Y_sd\langle B\rangle_{s}-\int_0^t
h_{x}(\alpha, s, X_s)P_sd\langle B\rangle_{s}\\&&-\int_0^t
h_{\alpha\alpha}(\alpha,s,X_s)d\langle B\rangle_{s}-\int_0^t
h_{xx}(\alpha, s,X_s)Y^{2}_sd\langle
B\rangle_{s}|^{2}]\nonumber\\
&\leq &C\int_0^T \mathbb{\hat{E}}[\sup\limits_{r\in
[0,s]}|Q_r^{h}-P_r|^{2}] ds+C\varepsilon+\frac{C}{\varepsilon}
(h^{4}+h^{4l_{2}+4}+h^{4m_{2}+4}+h^{4k_{2}+4})\\&&+\frac{C}{\varepsilon}\mathbb{\hat{E}}[\sup\limits_{t\in
[0,T]} |Z_s^{h}-Y_s|
^{4}]+C(h^{2}+h^{2n_{2}+2}+h^{2k_{2}+2})+C\mathbb{\hat{E}}[\sup\limits_{s\in[0,T]}|\widetilde{Y}_s
-Y_s|^{2}].
\end{eqnarray*}
Then (\ref{e4}) and the above inequalities yield
\begin{eqnarray*}\label{}
&&\mathbb{\hat{E}}[\sup\limits_{t\in [0,T]}|Q_t^{h}-P_t|^{2}]\\
&\leq &C\int_0^T \mathbb{\hat{E}}[\sup\limits_{r\in
[0,s]}|Q_r^{h}-P_r|^{2}]ds+\frac{C}{\varepsilon}\mathbb{\hat{E}}[\sup\limits_{t\in
[0,T]} |Z_s^{h}-Y_s| ^{4}]+C\varepsilon\\
&&+\frac{C}{\varepsilon}(h^{4}+h^{4l+4}+h^{4m+4}+h^{4k+4}
+h^{4l_{1}+4}+h^{4m_{1}+4}+h^{4k_{1}+4}+h^{4l_{2}+4}+h^{4m_{2}+4}+h^{4k_{2}+4})\\
&&+C(h^{2}+h^{2n+2}+h^{2k+2}+h^{2n_{1}+2}+h^{2k_{1}+2}+h^{2n_{2}+2}+h^{2k_{2}+2})
+C\mathbb{\hat{E}}[\sup\limits_{s\in[0,T]}|\widetilde{Y}_s
-Y_s|^{2}].
\end{eqnarray*}
Thus, from Gronwall's inequality it follows that
\begin{eqnarray*}\label{}
&&\mathbb{\hat{E}}[\sup\limits_{t\in
[0,T]}|Q_t^{h}-P_t|^{2}]\\&\leq&
\frac{C}{\varepsilon}\mathbb{\hat{E}}[\sup\limits_{t\in
[0,T]} |Z_s^{h}-Y_s| ^{4}]+C\varepsilon+C(h^{2}+h^{2n+2}+h^{2k+2}+h^{2n_{1}+2}+h^{2k_{1}+2}+h^{2n_{2}+2}+h^{2k_{2}+2})\\
&&+\frac{C}{\varepsilon}(h^{4}+h^{4l+4}+h^{4m+4}+h^{4k+4}
+h^{4l_{1}+4}+h^{4m_{1}+4}+h^{4k_{1}+4}+h^{4l_{2}+4}+h^{4m_{2}+4}+h^{4k_{2}+4})
\\&&+C\mathbb{\hat{E}}[\sup\limits_{s\in[0,T]}|\widetilde{Y}_s
-Y_s|^{2}].
\end{eqnarray*}
Then by Remark \ref{re2} and Proposition \ref{pr5}, we get
\begin{eqnarray*}\label{}
\lim\limits_{h\rightarrow0}\mathbb{\hat{E}}[\sup\limits_{t\in
[0,T]}|Q_t^{h}-P_t|^{2}]\leq C\varepsilon.
\end{eqnarray*}
Therefore,
\begin{eqnarray*}\label{}
\lim\limits_{h\rightarrow0}\mathbb{\hat{E}}[\sup\limits_{t\in
[0,T]}|Q_t^{h}-P_t|^{2}]=0.
\end{eqnarray*}
  The proof is complete.
\end{proof}

  \vspace{4mm}

\noindent{\bf Acknowledgements.}

\vspace{4mm}This work is supported by Young Scholar Award for
Doctoral Students of the Ministry of Education of China and the
Marie Curie Initial Training Network (PITN-GA-2008-213841).


\end{document}